\documentclass[11pt]{article}

\usepackage[usenames,dvipsnames,condensed]{xcolor}

\usepackage{amsthm}
\usepackage{amsfonts}
\usepackage[UKenglish]{babel}
\usepackage[usenames]{xcolor}
\usepackage{graphicx}
\usepackage{soul}
\usepackage{stfloats}
\usepackage{morefloats}
\usepackage{cite}
\usepackage{lscape}
\usepackage{epstopdf}
\usepackage{tikz-cd} 

\setlength{\textwidth}{6.5in}
\setlength{\topmargin}{-0.5in}
\setlength{\textheight}{9in}
\setlength{\oddsidemargin}{0in}
\setlength{\evensidemargin}{0in}
\usepackage{amsmath,amstext,amsopn,amsfonts,eucal,amssymb}
\usepackage{graphicx,wrapfig,url}

\newcommand{\GAP}{\textsf{GAP}}
\newcommand{\Sym}{\mathbb{S}}

\newcommand\Z{{\mathbb Z}}

\newtheorem{theorem}{Theorem}[section]
\newtheorem{corollary}[theorem]{Corollary}
\newtheorem{lemma}[theorem]{Lemma}
\newtheorem{proposition}[theorem]{Proposition}

\newtheorem{remark}[theorem]{Remark}

\newtheorem{question}[theorem]{Question}




\begin{document}

\title{Algebraic Properties  of Quandle  Extensions \\ and  Values of Cocycle Knot Invariants
}

\author{W. Edwin Clark \   and \  Masahico  Saito \\
Department of Mathematics and Statistics\\ University of South Florida
}

\date{\empty}

\maketitle

\begin{abstract}
Quandle 2-cocycles define invariants of classical and virtual knots, and extensions of quandles.
We show that the quandle 2-cocycle invariant with respect to a non-trivial $2$-cocycle
 is constant, or takes some other restricted form,  for classical knots when the corresponding extensions
satisfy certain algebraic conditions. In particular, if an abelian extension is a conjugation quandle, then the corresponding cocycle invariant is constant.
Specific examples are presented from the list of connected quandles of order less than 48. 
Relations among various quandle epimorphisms involved are also examined. 
\\[5mm]
Key words: quandles, quandle cocycle invariants, abelian extensions of quandles \\
MSC: 57M25
\end{abstract}

\section{Introduction}\label{intro-sec}

Sets with certain self-distributive operations called  {\it quandles}
have been studied since  the 1940s \cite{Taka},
and have been applied to  knot theory since early 1980s  \cite{Joyce,Mat}.
The number of  colorings of knot diagrams by quandle elements, in particular, 
has been widely used as a
knot invariant. Algebraic homology theories for quandles 
were defined \cite{CJKLS,FRS1}, and
 investigated.
 Knot invariants using cocycles have been defined \cite{CJKLS} and applied to knots and knotted surfaces
 \cite{CKS}. 
 Extensions of quandles by cocycles have been studied, for example, in \cite{AG,CENS,Eis3}.

Computations by Vendramin \cite{Leandro} significantly expanded the list
of small connected quandles.
These
quandles, called here {\it Rig} quandles,
may be found in the \GAP~package Rig \cite{rig}.  
Rig includes all  connected quandles of order less than 48, at this time. 
Properties of some of Rig quandles,  such as homology groups and cocycle invariants,
are also found in  \cite{rig}.
We  use the notation $Q(n,i)$
for the $i$-th quandle of order $n$ in the list of Rig quandles.

It was observed that some  Rig quandles have non-trivial second cohomology, yet have 
constant $2$-cocycle invariants with non-trivial $2$-cocycles,
as much as computer calculations have been performed for the knot table (Remark 4.5, \cite{CSV}).
It does not seem to have been 
established previously  whether they actually have constant values for all classical knots.
From  Theorem 5.5 in \cite{CKS:geom},   
any non-trivial $2$-cocycle has non-constant  invariant values for some virtual links.
Thus it is of interest if these quandles actually have constant values for all classical knots.
More generally, possible values of the cocycle invariants are largely unknown, and are of interest.
In this paper, we show that  certain algebraic properties of quandles imply 
that  the cocycle invariant is constant, or takes some restricted form, for classical knots.
In particular, we prove that 
several specific Rig quandles,  including some of those 
conjectured in \cite{CSV}, 
have constant cocycle invariant values for all classical knots for some non-trivial $2$-cocycles.

In Section~\ref{sec:prelim}, definitions, terminology and  lemmas are presented.
The main results and corollaries, and their proofs are given in Section~\ref{sec:main}.
Conjugation quandles are discussed in Sections~\ref{sec:conj} and \ref{sec:VC}.
Relations among various epimorphisms used in the proofs are examined in Secition~\ref{sec:rel}.

\section{Preliminaries}\label{sec:prelim}

In this section we briefly review some definitions and examples. 
More details can be found, for example, in \cite{CKS}. 

A {\it quandle} $X$ is a set with a binary operation $(a, b) \mapsto a * b$
satisfying the following conditions.
\begin{eqnarray}
\mbox{\rm (Idempotency) } & &  \mbox{\rm  For any $a \in X$,
$a* a =a$.} \label{axiom1} \\
\mbox{\rm (Right invertibility)}& & \mbox{\rm For any $b,c \in X$, there is a unique $a \in X$ such that 
$ a*b=c$.} \label{axiom2} \\
\mbox{\rm (Right self-distributivity)} & & 
\mbox{\rm For any $a,b,c \in X$, we have
$ (a*b)*c=(a*c)*(b*c). $} \label{axiom3} 
\end{eqnarray}
 A {\it quandle homomorphism} between two quandles $X, Y$ is
 a map $f: X \rightarrow Y$ such that $f(x*_X y)=f(x) *_Y f(y) $, where
 $*_X$ and $*_Y$ 
 denote 
 the quandle operations of $X$ and $Y$, respectively.
 A {\it quandle isomorphism} is a bijective quandle homomorphism, and 
 two quandles are {\it isomorphic} if there is a quandle isomorphism 
 between them.
 A quandle epimorphism $f: X \rightarrow Y$ is a {\it covering}~\cite{Eis3}
 if $f(x)=f(y)$ implies $a*x=a*y$ for all $a, x, y \in X$. 
   
  Let $X$ be a quandle.
  The {\it right translation}  ${R}_a:X \rightarrow  X$, by $a \in X$, is defined
by ${ R}_a(x) = x*a$ for $x \in X$. 
Then ${ R}_a$ is an automorphism of $X$ by Axioms (2) and (3). 
The subgroup of ${\rm Sym}(X)$ generated by the permutations ${ R}_a$, $a \in X$, is 
called the {\it {\rm inn}er automorphism group} of $X$,  and is 
denoted by ${\rm Inn}(X)$. 
The map ${\rm inn}: X \rightarrow {\rm inn}(X) \subset {\rm Inn}(X)$
defined by ${\rm inn}(x)=R_x$ is called the {\it inner representation}. 
An inner representation is a covering.

A quandle is {\it connected} if ${\rm Inn}(X)$ acts transitively on $X$.
A quandle is {\it faithful} if the mapping ${\rm inn}: X \rightarrow  {\rm Inn}(X)$ is an injection.

As in Joyce \cite{Joyce} a  quandle is defined  by  
a pair $(G, f)$ where $G$ is a  group,  $f \in {\rm Aut}(G)$,
and the quandle operation is defined by 
$x*y=f(xy^{-1}) y ,$ $ x,y \in G$.  We call such a quandle a {\it generalized Alexander quandle} and denote it by  ${\rm GAlex}(G,f)$.
If $G$ is abelian, such a quandle is  known as an {\it Alexander quandle}. 

In this paper all quandles are assumed to be finite.

\begin{figure}[htb]
    \begin{center}
   \includegraphics[width=3in]{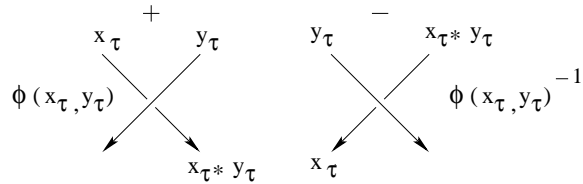}\\
    \caption{Colored crossings and cocycle weights }\label{coloredXing}
    \end{center}
\end{figure}

Let $D$ be a diagram of a knot $K$, and ${\cal A}(D)$ be the set of arcs of $D$.
A {\it  coloring}  of a knot diagram $D$ by a quandle $X$
is a map $C: {\cal A}(D) \rightarrow X$  satisfying the condition depicted in Figure~\ref{coloredXing}
at every 
 positive (left) and negative (right) crossing $\tau$,
respectively.  The pair $(x_\tau, y_\tau)$ of colors assigned to a pair of nearby arcs of a crossing $\tau$
is called the {\it source} colors, and the third arc is required to receive the color $x_\tau * y_\tau$.

In this paper we denote by $A$ a finite multiplicative abelian group whose identity element is denoted by $1$.
 A function $\phi: X \times X \rightarrow A$ for an abelian group $A$  is called
    a \emph{quandle $2$-cocycle}  \cite{CJKLS} if it satisfies $$ \phi (x, y)
    \phi(x,z)^{-1}  \phi(x*y, z)  \phi(x*z, y*z)^{-1}=1$$ 
    for any $x,y,z \in X$ and 
    $\phi(x,x)=1$ for any $x\in X$. For a quandle $2$-cocycle $\phi$, $E=X \times A$ becomes a quandle by setting 
    \[
    (x, a) * (y, b)=(x*y, a\, \phi(x,y))
    \]
    for $x, y \in X$, $a,b \in A$, denoted by
    $E(X, A, \phi)$ or simply $E(X, A)$, and it is called an \emph{abelian  extension} of $X$ by $A$. 
   Let $\pi: E (X, A) = X \times A \rightarrow X$ be the projection to the first factor.
    We also say that a quandle epimorphism $f: Y \rightarrow Z$ is an {\it abelian extension} 
    if there exists a isomorphisms $\nu:  E(X, A)  \rightarrow Y$ and $h: X \rightarrow Z$ such that $ h \pi= f  \nu$. 
    An abelian extension is a covering.
See  \cite{CENS} for more information on abelian
    extensions of quandles and \cite{CJKLS,CJKS,CJKS2} for more on quandle
    cohomology.

Let $X$ be a quandle, and $\phi$ be a $2$-cocycle with  coefficient group $A$,
a finite abelian group. Let $D$ be a diagram of a knot $K$.
The $2$-cocycle (or cocycle, for short) invariant is an element of the group ring $\Z [A]$ 
defined by $\Phi_{(X,A,\phi)} (D) =\Phi_{\phi} (D)= \sum_{C} \prod_{\tau} \phi(x_\tau, y_\tau)^{\epsilon(\tau)}$, where
the product ranges over all crossings $\tau$, the sum ranges over all colorings of a 
given knot diagram,
$(x_\tau, y_\tau)$ are source colors at the crossing $\tau$, and $\epsilon(\tau)$ 
is the sign of $\tau$ as specified in Figure~\ref{coloredXing}.
For a given coloring $C$, the element $\prod_{\tau} \phi(x_\tau, y_\tau)^{\epsilon(\tau)} \in A$ 
is denoted by $B_\phi (D, C) $.
For an abelian group $A$, the cocycle invariant takes the form $\sum_{a \in A} n_a a$ where $n_a \in \Z$,
and it is 
{\it constant} if $n_a=0$ when $a$ is not the identity of $A$.
It is known \cite{CJKLS} that $\Phi_\phi (D)$ is independent of choice of  diagram $D$ for a knot $K$. 
Thus we 
have the {\it 2-cocycle invariant} $\Phi_{(X,A, \phi )}(K) = \Phi_{ \phi }(K) = \Phi_\phi(D)$.

A  $1$-tangle is a properly embedded arc in a $3$-ball, and the equivalence of
$1$-tangles is defined by ambient  isotopies of the $3$-ball fixing the
boundary (cf.~\cite{Conway}).  A diagram of a $1$-tangle  is defined in a
manner similar to a knot diagram, from a regular projection to a disk by
specifying  crossing information, see Figure~\ref{tangles}(A).  An orientation
of a $1$-tangle is specified by an arrow on a diagram as depicted.  A knot
diagram is obtained from a $1$-tangle diagram by closing the end points by a
trivial arc outside of a disk. This procedure is called the {\it closure} of a
$1$-tangle.  If a $1$-tangle is oriented, then the closure inherits the
orientation.
Two  diagrams of the same $1$-tangle are related by Reidemeister moves.
There is a bijection between knots and $1$-tangles for classical knots, and invariants of 1-tangles give rise to
invariants of knots, see \cite{Eis3}, for example.

\begin{figure}[htb]
    \begin{center}
   \includegraphics[width=2.7in]{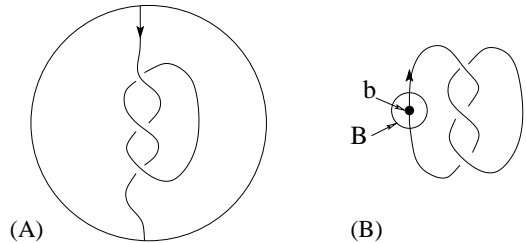}\\
    \caption{ $1$-tangles }\label{tangles}
    \end{center}
\end{figure}

A $1$-tangle is obtained from a knot $K$ as follows.  Choose   a base point $b
\in K$  and a small closed ball neighborhood  $B$ of $b$ in the $3$-sphere $\Sym^3$
such that  $(B, K\cap B)$ is a trivial ball-arc pair (so that $K \cap B$ is
unknotted in $B$, see Figure~\ref{tangles}(B)).  Then  $(\Sym^3 \setminus {\rm
Int}(B), K \cap (\Sym^3 \setminus {\rm Int}(B)))$ is a $1$-tangle called the
$1$-tangle associated with $K$.  The resulting $1$-tangle does not depend on
the choice of a base point.  
 If  a knot is oriented, then the corresponding $1$-tangle inherits the
orientation. 

A quandle coloring of an oriented $1$-tangle diagram is defined in a  manner
similar to  those for knots.  We do not require that the end points receive the
same color for a quandle coloring of $1$-tangle diagrams.
As in \cite{CSV} we say that a quandle $X$ is {\it end monochromatic} for a tangle diagram $T$ if 
any coloring of $T$ by $X$ assigns the same color on the two end arcs.
We use the same notations  $\Phi_{\phi} (T) = \sum_{\cal C} \prod_{\tau} \phi(x_\tau, y_\tau)^{\epsilon(\tau)}$
and $B_\phi (T, C) $ for tangle diagrams $T$. This $\Phi_{\phi} (T)$ is again independent of choice of a diagram,
and an invariant of tangles.
Figures~\ref{coloredXing}, \ref{tangles}, \ref{endmono} are taken from \cite{CSV}. 

\begin{figure}[htb]
    \begin{center}
   \includegraphics[width=1.1in]{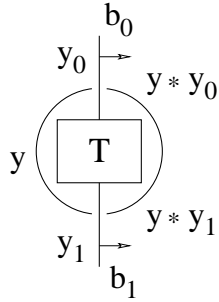}\\
    \caption{ Colorings of a  tangle }\label{endmono}
    \end{center}
\end{figure}

We recall  the following two lemmas.
 
\begin{lemma}[Eisermann~\cite{Eis3}, Theorem 30] \label{lem:cover}
Let $f: Y \rightarrow X$ be a covering, and 
$C_X: {\cal A} (T) \rightarrow X$ be a coloring of a $1$-tangle $T$ by $X$. 
Let $b_0, b_1$ be the top and bottom arcs as depicted in Figure~\ref{endmono}. 
Then  for any $y \in Y $ such that $f(y)=C_X(b_0)$,  
there exists a unique coloring $C_Y: {\cal A} (T) \rightarrow Y$ such that 
$f  C_Y=C_X$ and $C_Y(b_0)=y$.
\end{lemma}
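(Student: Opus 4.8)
The plan is to lift the colouring arc by arc, running along the strand of the $1$-tangle from $b_0$ to $b_1$, the covering hypothesis being exactly what makes each step well defined; this is a combinatorial analogue of the unique path lifting property of topological coverings. First I would orient $T$ from $b_0$ towards $b_1$ and list the arcs of a diagram of $T$ as $b_0=\alpha_0,\alpha_1,\dots,\alpha_m=b_1$ in the order in which the oriented strand meets them; since the underlying curve of a $1$-tangle is a single embedded arc, each $\alpha_k$ is traversed exactly once, as one contiguous sub-segment of the strand, and consecutive arcs $\alpha_{k-1},\alpha_k$ are separated by a single undercrossing $\tau_k$, whose over-arc I denote $o_k$ (which may well be one of the arcs lying later in the list). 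The one fact about coverings that I would record is that the defining condition is equivalent to ``$R_y=R_{y'}$ whenever $f(y)=f(y')$'', so that right translation $R_{\tilde o}$ by a lift $\tilde o\in f^{-1}(x)$ of a given $x\in X$ depends only on $x$; in particular $v*\tilde o$, and the unique $w\in Y$ with $w*\tilde o=v$, depend only on $v$ and $x$ and not on the choice of the lift $\tilde o$.

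\textbf{Existence.} I would define $C_Y$ by induction on $k$: set $C_Y(\alpha_0)=y$, and given $C_Y(\alpha_{k-1})$ with $fC_Y(\alpha_{k-1})=C_X(\alpha_{k-1})$, let $C_Y(\alpha_k)$ be the colour forced by the rule of Figure~\ref{coloredXing} at $\tau_k$. According to the sign of $\tau_k$ and to whether the passage from $\alpha_{k-1}$ to $\alpha_k$ runs with or against the orientation of the under-strand, this rule reads either $C_Y(\alpha_k)=C_Y(\alpha_{k-1})*\tilde o_k$ or $C_Y(\alpha_k)*\tilde o_k=C_Y(\alpha_{k-1})$, where $\tilde o_k$ is any lift of the already-known colour $C_X(o_k)$; by the remark above the resulting $C_Y(\alpha_k)$ exists, is unique (right invertibility), is independent of $\tilde o_k$, and crucially depends only on data already assigned. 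Applying $f$ and using that $C_X$ obeys the same rule at $\tau_k$ yields $fC_Y(\alpha_k)=C_X(\alpha_k)$, so the induction continues to $k=m$. It then remains to check that $C_Y$ really is a colouring of $T$: at an arbitrary crossing with over-arc $o$ one has $fC_Y(o)=C_X(o)$, hence $C_Y(o)$ is itself an admissible choice of $\tilde o$, and the constraint imposed there in terms of ``any lift of $C_X(o)$'' becomes precisely the quandle-colouring rule written with $C_Y(o)$.

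\textbf{Uniqueness, and the main obstacle.} If $C_Y'$ is a second colouring with $fC_Y'=C_X$ and $C_Y'(b_0)=y$, then at $\tau_k$ the colouring rule expresses $C_Y'(\alpha_k)$ in terms of $C_Y'(\alpha_{k-1})$ and $C_Y'(o_k)$; since $C_Y'(o_k)\in f^{-1}(C_X(o_k))$, the remark about coverings shows this expression equals the one used to define $C_Y(\alpha_k)$, so induction along the strand gives $C_Y'=C_Y$. The single genuine difficulty in the argument is that when the strand first dives under $\tau_k$ the over-arc $o_k$ may lie further along, so that $C_Y(o_k)$ is not yet available; the covering hypothesis is exactly what disposes of this, since it lets the lift be computed from the given colour $C_X(o_k)$, with every preimage yielding the same value.
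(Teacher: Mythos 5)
The paper does not prove this lemma at all: it is quoted verbatim from Eisermann (Theorem 30 of \cite{Eis3}) and used as a black box, so there is no in-paper argument to compare against. Your reconstruction is correct and is essentially the standard proof behind that citation: order the arcs along the single strand of the $1$-tangle, observe that the covering condition says $R_{\tilde o}$ depends only on $f(\tilde o)$ so each step of the recursion is well defined even when the over-arc has not yet been coloured, propagate from $b_0$, and then verify a posteriori that the eventual colour of each over-arc is itself an admissible lift, which makes the assignment a genuine colouring; uniqueness follows by the same induction. You correctly isolate the one real issue (over-arcs occurring later in the traversal) and dispose of it with exactly the right use of the covering hypothesis.
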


\begin{lemma}[\cite{CENS,CSV} ] \label{lem:const}
Let  
$E=E(X, A, \phi)$ be an abelian extension for a $2$-cocycle $\phi$. 
Then   $E$ is end monochromatic  $T$  if and only if 
 $\Phi_{(X, A, \phi)}(K)$ is constant, where $T$ is a $1$-tangle for a knot $K$.
\end{lemma}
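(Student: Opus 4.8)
The plan is to reduce the end‑monochromatic condition for $E$ to a condition on the Boltzmann weights $B_\phi(T,C)$, exploiting that the projection $\pi\colon E=X\times A\to X$ is a covering together with the unique‑lifting property of Lemma~\ref{lem:cover}. Fix a diagram of the $1$‑tangle $T$ associated with $K$, with top and bottom arcs $b_0,b_1$ as in Figure~\ref{endmono}. For each coloring $C\colon{\cal A}(T)\to X$ and each $a\in A$, Lemma~\ref{lem:cover} applied to $\pi$ provides a unique coloring $\widetilde{C}_a\colon{\cal A}(T)\to E$ with $\pi\widetilde{C}_a=C$ and $\widetilde{C}_a(b_0)=(C(b_0),a)$; conversely every coloring of $T$ by $E$ is of this form, being reconstructed from its projection under $\pi$ and the $A$‑coordinate of its value on $b_0$. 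Thus colorings of $T$ by $E$ are parametrized by pairs $(C,a)$ with $C$ a coloring of $T$ by $X$ and $a\in A$, and one only has to decide for which pairs $\widetilde{C}_a(b_0)=\widetilde{C}_a(b_1)$.

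The key step is to compute $\widetilde{C}_a(b_1)$. Travelling along the single strand of $T$ from $b_0$ to $b_1$ and applying the extension rule $(x,s)*(y,t)=(x*y,\,s\,\phi(x,y))$ (and its inverse) at each crossing, the first coordinate of $\widetilde{C}_a$ evolves exactly as $C$ does, while the second coordinate is multiplied by $\phi(x_\tau,y_\tau)^{\epsilon(\tau)}$ each time the strand passes as the under‑arc at a crossing $\tau$. In a $1$‑tangle the strand is the under‑arc at every crossing exactly once, so the accumulated factor is precisely $B_\phi(T,C)$, whence
\[
\widetilde{C}_a(b_0)=\bigl(C(b_0),\,a\bigr),\qquad \widetilde{C}_a(b_1)=\bigl(C(b_1),\,a\,B_\phi(T,C)\bigr).
\]
Since $A$ is a group, $a=a\,B_\phi(T,C)$ is equivalent to $B_\phi(T,C)=1$, regardless of $a$. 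Hence $\widetilde{C}_a$ assigns the same color to $b_0$ and $b_1$ if and only if $C(b_0)=C(b_1)$ and $B_\phi(T,C)=1$, so $E$ is end monochromatic for $T$ if and only if every coloring $C$ of $T$ by $X$ satisfies $C(b_0)=C(b_1)$ and $B_\phi(T,C)=1$.

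Finally I would match this with constancy of $\Phi_{(X,A,\phi)}(K)$. Closing the trivial end arc identifies the colorings of $K$ by $X$ with the colorings of $T$ by $X$ that agree on $b_0$ and $b_1$, and the crossings of the two diagrams are the same, so $B_\phi(K,C)=B_\phi(T,C)$ for such $C$. Since the elements of $A$ form a $\Z$‑basis of $\Z[A]$ and every coefficient in $\Phi_\phi(K)=\sum_{C}B_\phi(K,C)$ is nonnegative, no cancellation is possible, so $\Phi_\phi(K)$ is constant if and only if $B_\phi(K,C)=1$ for every coloring $C$ of $K$. Combining this with the second paragraph yields the equivalence; in the direction assuming constancy one moreover uses that, for the $1$‑tangle of a knot, every $X$‑coloring of $T$ already agrees on $b_0,b_1$, so the clause $C(b_0)=C(b_1)$ is no extra restriction. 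I expect the fiber‑shift identity for $\widetilde{C}_a(b_1)$ to be the only substantive step — the delicate point being to keep the crossing‑sign conventions of Figure~\ref{coloredXing} consistent through the negative crossings — while everything else is bookkeeping with Lemma~\ref{lem:cover} and the definition of the extension.
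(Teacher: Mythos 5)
Your main construction is correct and is essentially the standard argument behind this lemma (which the paper itself quotes from \cite{CENS,CSV} without reproving): the unique-lifting property of Lemma~\ref{lem:cover} parametrizes the $E$-colorings of $T$ by pairs $(C,a)$, and the fiber-shift identity $\widetilde{C}_a(b_1)=\bigl(C(b_1),\,a\,B_\phi(T,C)\bigr)$ is exactly the right key step, with the crossing signs handled correctly. In particular your proof of the implication ``$E$ end monochromatic for $T$ $\Rightarrow$ $\Phi_\phi(K)$ constant'' --- which is the only direction the paper actually uses, in Theorem~\ref{thm:main} and its contrapositive applications --- is complete.

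The converse direction has a genuine gap. You assert that ``for the $1$-tangle of a knot, every $X$-coloring of $T$ already agrees on $b_0,b_1$.'' That assertion is precisely the statement that $X$ itself is end monochromatic for $T$, which is the nontrivial property this whole circle of ideas is designed to detect; it is not automatic. Lemma~\ref{lem:end} gives only $R_{C(b_0)}=R_{C(b_1)}$, which forces $C(b_0)=C(b_1)$ when $X$ is faithful but not in general, and non-faithful quandles that fail to be end monochromatic for classical tangles do exist (for instance, the quandles of Corollary~\ref{cor:no-inn} are not end monochromatic for the tangles of the knots witnessing non-constancy). Concretely, if $X$ admits a coloring $C$ of $T$ with $C(b_0)\neq C(b_1)$ and $\phi$ is a coboundary, then $\Phi_\phi(K)$ is constant while the lift of $C$ shows that $E$ is not end monochromatic, so the converse as you argue it cannot go through without an additional hypothesis such as faithfulness of $X$ (or end-monochromaticity of $X$ for $T$). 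You should either add such a hypothesis when proving the ``constant $\Rightarrow$ end monochromatic'' direction, or confine the proof to the forward implication, which is all that is needed downstream.
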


\begin{lemma}\label{lem:end}
Let  $C:  {\cal A} (T) \rightarrow Y$ be a coloring of a classical $1$-tangle diagram $T$ by a quandle $Y$. 
For the top and bottom arcs $b_0$ and $b_1$ of $T$, respectively, 
let $y_0=C(b_0)$ and $y_1=C(b_1)$.
Then  ${\rm inn}(y_0)=R_{y_0}=R_{y_1}={\rm inn}(y_1)$. 
\end{lemma}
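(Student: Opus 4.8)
The plan is to walk along the strand of $T$ from $b_0$ to $b_1$, track the right translation of the running color, and identify the net change as conjugation by (the image of) a longitude of the closure knot $K$ of $T$; since $T$ is classical this longitude commutes with the corresponding meridian, which forces $R_{y_0}=R_{y_1}$ even though $y_0$ and $y_1$ themselves may differ.

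First I would orient $T$ and list the arcs along the strand from $b_0$ to $b_1$ as $b_0=c_0,c_1,\dots,c_m=b_1$, where $c_j$ and $c_{j+1}$ are the two under-arcs at an undercrossing whose overarc has color $u_{j+1}\in Y$. The coloring rule of Figure~\ref{coloredXing} gives $C(c_{j+1})=C(c_j)*u_{j+1}$ at one type of crossing and $C(c_j)=C(c_{j+1})*u_{j+1}$ at the other. Right self-distributivity yields $R_{x*a}=R_aR_xR_a^{-1}$, so applying this (or its inverse) at each crossing gives $R_{C(c_{j+1})}=R_{u_{j+1}}^{\varepsilon_{j+1}}R_{C(c_j)}R_{u_{j+1}}^{-\varepsilon_{j+1}}$ with $\varepsilon_{j+1}=\pm1$. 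Composing over $j=0,\dots,m-1$,
\[
R_{y_1}=w\,R_{y_0}\,w^{-1},\qquad w=R_{u_m}^{\varepsilon_m}\cdots R_{u_1}^{\varepsilon_1}\in{\rm Inn}(Y),
\]
so it suffices to prove that $w$ commutes with $R_{y_0}$.

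Here the hypothesis that $T$ is classical enters. Composing $C$ with the inner representation turns it into a coloring of $T$ by a conjugation quandle, equivalently a homomorphism $\rho$ from the fundamental group of the complement of $T$ in its ambient ball into ${\rm Inn}(Y)$, carrying the meridian of each arc $a$ to $R_{C(a)}$. Since the closure arc together with the complementary ball forms a trivial ball--arc pair, van Kampen identifies this tangle group with the knot group $\pi_1(\Sym^3\setminus K)$, compatibly with meridians. Under $\rho$ the meridian $\mu$ at $b_0$ maps to $R_{y_0}$, while the word in meridians read off the overstrands met along $T$ — a longitude $\ell$ of $K$, lying with $\mu$ in the peripheral subgroup $\cong\Z^2$ — maps to $w$. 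As $\mu$ and $\ell$ commute there, $wR_{y_0}w^{-1}=\rho(\ell\mu\ell^{-1})=\rho(\mu)=R_{y_0}$, hence $R_{y_0}=R_{y_1}$.

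The step I expect to require real care is that identification: one must confirm that the complement of the \emph{open} tangle $T$ has the same fundamental group and peripheral meridian/longitude data as $\Sym^3\setminus K$, so the relation $[\mu,\ell]=1$ is available for an arbitrary coloring of $T$ and not only for those that close up to a coloring of $K$ (for which $y_0=y_1$ already, making the lemma vacuous). I would justify it either via the van Kampen computation indicated above, using that the complementary ball minus the trivial closure arc deformation retracts onto the gluing annulus whose core is a meridian, or combinatorially, by deriving the diagrammatic longitude relation along $T$ from the Wirtinger relations of $T$ together with the peripheral relation. The crossing-by-crossing bookkeeping of the second paragraph is routine by comparison, and this identification is exactly where classicality is indispensable — the analogous statement is false for virtual $1$-tangles.
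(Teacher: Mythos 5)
Your argument is correct, but it follows a genuinely different route from the paper. The paper's proof (following Przytycki and Nosaka, as adapted in \cite{CSV}) is purely diagrammatic: an auxiliary unknotted circle placed behind the tangle meets the diagram only in undercrossings with $b_0$ and $b_1$, and since for a classical $1$-tangle this circle is split from $T$, the count of colorings of the two-component diagram forces $y*y_0=y*y_1$ for every $y\in Y$, i.e.\ $R_{y_0}=R_{y_1}$, using nothing beyond Reidemeister invariance of colorings. You instead pass to the group level: the assignment $a\mapsto R_{C(a)}$ satisfies the Wirtinger relations, hence defines a homomorphism from the tangle group to ${\rm Inn}(Y)$; the crossing-by-crossing computation gives $R_{y_1}=wR_{y_0}w^{-1}$ with $w$ the image of the Wirtinger longitude word; and commutativity of the peripheral subgroup of the classical knot group finishes the argument. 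Your flagged "step requiring care" is exactly right and is where classicality enters: van Kampen applied to the trivial ball--arc pair shows the inclusion $B^3\setminus T\hookrightarrow \Sym^3\setminus K$ induces an isomorphism of fundamental groups (the complementary ball minus the trivial closure arc is a solid torus whose core is a meridian, carried isomorphically from the gluing annulus), so the peripheral relation $[\mu,\ell]=1$ is available for every coloring of $T$, not just those with $y_0=y_1$. Two remarks on the trade-off: the paper's route is more elementary and avoids fundamental-group machinery entirely, while yours makes the failure for virtual tangles conceptually transparent (no peripheral $\Z^2$). Also, your detour through the longitude is slightly more than needed --- the same van Kampen identification already shows that the meridians at $b_0$ and $b_1$ are equal in the tangle group (both are isotopic to the core of the boundary annulus), which yields $R_{y_0}=R_{y_1}$ directly and is precisely the group-theoretic shadow of the paper's circle argument.
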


\begin{proof}
The proof in \cite{CSV}, based on corresponding statements in \cite{Nos,Jozef,Jozef2004} on faithful quandles, applies in this situation. The idea of proof is seen in Figure~\ref{endmono}. 
The large circle behind the tangle $T$ in the figure can be pulled out of $T$ if $T$ corresponds to a classical knot. 
Colorings of tangle diagrams can be defined when a tangle has more than one component, 
 in a similar manner.
Hence any  color $y$ at the left 
of the large circle 
should extend to the color to the right, so that for the colors $y_0$ and $y_1$ for 
the top and bottom arcs $b_0$ and $b_1$ must satisfy $y*y_0=y*y_1$, hence we obtain 
$R_{y_0}=R_{y_1}$. 
\end{proof}

\begin{figure}[htb]
    \begin{center}
   \includegraphics[width=3.5in]{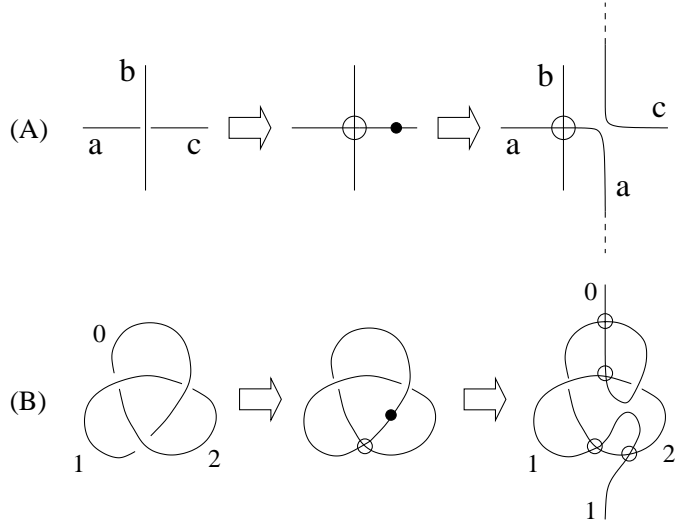}\\
    \caption{ Long virtual knots }\label{virtualize}
    \end{center}
\end{figure}

\begin{remark}
{\rm
To see that the condition of being a classical knot is essential, 
we observe examples of {\it long virtual knots} (i.e., virtual $1$-tangles)
 for which the conclusion of Lemma~\ref{lem:end}
 does not hold.
  See \cite{Man}, for example, for long virtual knots.
  Colorings of oriented virtual (and long virtual) knots are defined in a manner similar to classical knots, with colors unchanged for each transverse arc at 
  each virtual crossing
  \cite{Kauff}. 
Suppose a (virtual or classical) knot diagram $D$  is colored non-trivially by a faithful quandle $X$, 
and let $\tau$ be a non-trivially colored crossing (that is, distinct colors appear at $\tau$). 
Since $X$ is faithful, all three colors at $\tau$ are distinct (see Figure~\ref{virtualize}(A) left). 
Make $\tau$ a virtual crossing (called {\it virtualization of a crossing}) as in the middle of (A),
and cut and prolong one of the under-arcs up and down, to make a long virtual knot (a virtual $1$-tangle),
as in (A) right. A cut point is indicated by a dot in (A) middle. Prolonged arcs cross other arcs by virtual crossings. 
Then the top and bottom arcs are colored by elements $a, c \in X$ such that $R_a \neq R_c$. 
Thus the resulting long virtual knot is not classical.
An example of this construction is illustrated in Figure~\ref{virtualize}(B) with a Fox tri-coloring.
The quandle used here is $\Z_3$ with $a*b=2b-a$ mod 3, a well known {\it dihedral} quandle of order 3, which is faithful. 
}
\end{remark}

\section{
Cocycle invariants and  the images of inner representations
}\label{sec:main}

In this section we 
present relations between certain algebraic properties  of quandles and values of the quandle cocycle invariant.
We do not assume connectivity of quandles in this section unless otherwise specified.

\begin{theorem}\label{thm:main}
Let $E=E(X,A,\phi)$ be an abelian extension of a quandle $X$.
If $E \cong {\rm inn}(Y)$ for some quandle $Y$,
then $\Phi_{(X,A,\phi)}(K)$ is constant for all classical knots $K$.
\end{theorem}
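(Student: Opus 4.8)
The plan is to verify the hypothesis of Lemma~\ref{lem:const}: it suffices to show that the abelian extension $E$ is end monochromatic for the $1$-tangle $T$ associated with an arbitrary classical knot $K$, since by that lemma this is equivalent to $\Phi_{(X,A,\phi)}(K)$ being constant. So fix such a $K$, a $1$-tangle diagram $T$ for it with top and bottom arcs $b_0, b_1$, and a quandle isomorphism $\psi \colon E \to {\rm inn}(Y)$.

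The key point is that, although $E$ itself need not be faithful, its isomorphic copy ${\rm inn}(Y)$ sits as the base of the inner representation ${\rm inn}\colon Y \to {\rm inn}(Y)$, which is a covering (as recalled in Section~\ref{sec:prelim}) and is surjective onto ${\rm inn}(Y)$ by construction. So I would take an arbitrary coloring $C\colon {\cal A}(T) \to E$, transport it to the coloring $C' = \psi \circ C \colon {\cal A}(T) \to {\rm inn}(Y)$, and then invoke Lemma~\ref{lem:cover}: choosing any $y \in Y$ with ${\rm inn}(y) = C'(b_0)$ yields a coloring $\widetilde{C} \colon {\cal A}(T) \to Y$ with ${\rm inn} \circ \widetilde{C} = C'$. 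In other words, every $E$-coloring of $T$ lifts, through $\psi$ and the inner representation, to a $Y$-coloring of $T$.

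Now apply Lemma~\ref{lem:end} to the classical $1$-tangle diagram $T$ together with the coloring $\widetilde{C}$: setting $y_0 = \widetilde{C}(b_0)$ and $y_1 = \widetilde{C}(b_1)$, one gets $R_{y_0} = R_{y_1}$, that is, ${\rm inn}(y_0) = {\rm inn}(y_1)$ as elements of ${\rm inn}(Y)$. Since ${\rm inn} \circ \widetilde{C} = C' = \psi \circ C$, this reads $\psi(C(b_0)) = \psi(C(b_1))$, whence $C(b_0) = C(b_1)$ because $\psi$ is injective. As $C$ was arbitrary, $E$ is end monochromatic for $T$, and Lemma~\ref{lem:const} yields the conclusion.

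The one genuine subtlety — and, I suspect, the reason the hypothesis is phrased as $E \cong {\rm inn}(Y)$ rather than simply requiring $E$ to be faithful — is that Lemma~\ref{lem:end} cannot be applied to an $E$-coloring directly: it only produces $R_{e_0} = R_{e_1}$ for the two end colors, and this forces $e_0 = e_1$ only when the coloring quandle is faithful. Passing to the model ${\rm inn}(Y)$ upgrades ``equal right translations'' to ``equal colors'', since for a lifted $Y$-coloring the element ${\rm inn}(y_i)$ of ${\rm inn}(Y)$ \emph{is} the color carried by $b_i$ under $C'$. Everything else is a routine chase through Lemmas~\ref{lem:cover}, \ref{lem:end}, and~\ref{lem:const}; in particular no cocycle computation is needed.
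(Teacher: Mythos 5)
Your proposal is correct and follows essentially the same route as the paper's own proof: lift colorings through the covering ${\rm inn}\colon Y \to {\rm inn}(Y) \cong E$ via Lemma~\ref{lem:cover}, apply Lemma~\ref{lem:end} to the lifted $Y$-coloring to identify the end colors in $E$, and conclude via Lemma~\ref{lem:const}. The only (harmless) difference is that you start from an arbitrary $E$-coloring of $T$, whereas the paper first lifts an $X$-coloring to $E$; your observation about why one must pass to the model ${\rm inn}(Y)$ rather than merely assuming faithfulness is exactly the point of the hypothesis.
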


\begin{proof}
Let $T$ be a $1$-tangle of $K$,  $b_0, b_1$ be the top and bottom arcs of $T$, respectively.
Let $C$ be a coloring of a diagram of $K$ by  $X$, and use the same notation  $C: {\cal A}(T) \rightarrow X$ 
for a corresponding coloring of $T$ such that $C(b_0)=C(b_1)=x \in X$.
Then $C$ extends to a coloring $C_E: {\cal A}(T) \rightarrow E$ 
by Lemma~\ref{lem:cover}. 

Recall that the inner representation is a covering. 
Without loss of generality, we assume that ${\rm inn}(Y)=E$.
By assumption and Lemma~\ref{lem:cover}, $C_E$ extends to a coloring 
$C_Y$. Since $E$ is an abelian extension of $X$, 
Lemma~\ref{lem:const}  and Lemma~\ref{lem:end} imply 
that the cocycle invariant is constant.
\end{proof}

To apply the theorem to some Rig quandles, we observe the following.

\begin{lemma}\label{lem:order2}
If ${\rm inn} : Y \rightarrow {\rm inn}(Y)= X$, for connected quandles $X$ and $Y$,  satisfies $|Y|/|X|=2$, then 
${\rm inn}$  is an abelian extension.
\end{lemma}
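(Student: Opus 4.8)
The plan is to identify $Y$ (up to isomorphism) with a quandle of the form $E(X,A,\psi)$ for a suitable coefficient group $A$ and $2$-cocycle $\psi$, and the obvious candidate for $A$ is the cyclic group of order $2$, since $|Y|/|X|=2$. First I would use the covering map ${\rm inn}\from Y \rightarrow X$ to put a ``layered'' structure on $Y$: because $X$ is connected, ${\rm inn}(Y)=X$ acts transitively on $X$, and the covering property (if ${\rm inn}(y)={\rm inn}(y')$ then $a*y=a*y'$ for all $a$) forces each fiber ${\rm inn}^{-1}(x)$ to have the same cardinality, which here is $2$. So as a set $Y$ is in bijection with $X \times \{0,1\}$ via a choice of section; the point is to arrange this bijection so that it is a quandle isomorphism onto some $E(X,\Z_2,\psi)$.

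The key steps, in order, would be: (1) Fix a section $s\from X \rightarrow Y$ of ${\rm inn}$ (set-theoretic), so every element of $Y$ is uniquely $s(x)$ or $\overline{s(x)}$, the two preimages of $x$. (2) Compute $s(x) * s(y)$: it projects to $x*y$, so it equals $s(x*y)$ or $\overline{s(x*y)}$; define $\psi(x,y)\in\Z_2$ to record which. Similarly analyze $\overline{s(x)} * s(y)$, $s(x)*\overline{s(y)}$, etc. Here is where the covering condition does the real work: since ${\rm inn}(s(y))={\rm inn}(\overline{s(y)})=R_y$, right-translation by $s(y)$ and by $\overline{s(y)}$ must agree as maps, so $s(x)*s(y)$ and $s(x)*\overline{s(y)}$ have the same image under... no — rather, the covering property says colorings don't see the difference, i.e. $z * s(y) = z * \overline{s(y)}$ for all $z\in Y$; this pins down that right translation by an element of $Y$ depends only on its image in $X$, which is exactly the structural feature of an abelian extension (the $*$-action on the $A$-coordinate is by a $1$-cocycle valued function $\phi(x,y)$ independent of the $A$-coordinate of the second argument). (3) Verify that the resulting $\psi\from X\times X\rightarrow\Z_2$ satisfies the $2$-cocycle condition and $\psi(x,x)=0$ — this is forced by right self-distributivity and idempotency in $Y$ translated through the bijection. (4) Conclude that $Y \cong E(X,\Z_2,\psi)$ and hence that ${\rm inn}$ is an abelian extension in the sense defined in Section~\ref{sec:prelim} (take $\nu$ the isomorphism just built and $h=\id_X$).

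The main obstacle I anticipate is step (2)–(3): showing that the cocycle data extracted from $Y$ genuinely depends only on the \emph{pair} $(x,y)\in X\times X$ and not on the chosen lifts, and that the operation really has the product form $(x,a)*(y,b)=(x*y,\,a+\psi(x,y))$ rather than some twisted form where $b$ also enters. This independence from $b$ is precisely the covering hypothesis, so the argument should go through, but one must be careful that connectivity of $Y$ (not just of $X$) is used to guarantee that the two sheets are ``globally consistent'' — i.e. that a single $\Z_2$-valued $\psi$ works, rather than the extension being a nontrivial bundle that is only locally a product. Connectivity of $Y$ should rule out the degenerate possibility that $Y \cong X \sqcup X$ (two disjoint copies), which is the only other way a $2$-to-$1$ covering of a connected quandle could look, and that case is excluded because a disjoint union is never connected. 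Once the product structure is established, checking the cocycle identity is a routine translation of the axioms and I would not belabor it.
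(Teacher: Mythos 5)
Your proof is correct, and the underlying mathematics coincides with the paper's; the difference is that the paper outsources both key steps to the literature while you carry them out directly. The paper's proof quotes the result of \cite{AG} that for connected quandles the inner representation admits a product decomposition $Y \cong X \times S$ with $(x,s)*(y,t)=(x*y,\beta_{x,y}(s))$ for some $\beta: X^2 \rightarrow {\rm Sym}(S)$, and then quotes the proof of Theorem 7.1 of \cite{CSV} for the observation that when $|S|=2$ one may take $S=\Z_2$ and $\beta_{x,y}(a)=a\,\phi(x,y)$ with $\phi$ a $2$-cocycle. Your argument reconstructs exactly this in the case at hand: the covering property of ${\rm inn}$ makes the product independent of the second factor's fiber coordinate, the restriction of $R_{(y,b)}$ to the fiber over $x$ is a bijection onto the fiber over $x*y$, and --- the step where index $2$ is essential --- every bijection of a two-element set identified with $\Z_2$ is a translation $a \mapsto a+\phi(x,y)$; idempotency and right self-distributivity in $Y$ then force $\phi(x,x)=0$ and the $2$-cocycle identity. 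The self-contained route has the virtue of making visible exactly where $|S|=2$ is used: for larger fibers the permutations $\beta_{x,y}$ need not be translations, which is why, for instance, ${\rm inn}: Q(30,4)\rightarrow Q(10,1)$ fails to be an abelian extension. One small correction: your worry that connectivity of $Y$ is needed for ``global consistency'' of the two sheets is misplaced. Once each fiber permutation is known to be a $\Z_2$-translation, any set-theoretic section already yields the global product form, and the ``two disjoint sheets'' possibility is simply the trivial extension $E(X,\Z_2,1)$, which is still an abelian extension under the paper's definition; connectivity (of $X$ alone, via lifting right translations) is what guarantees that all fibers have cardinality $2$.
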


\begin{proof}
For ${\rm inn}: Y \rightarrow {\rm inn}(Y)=X$, where  $X$ and $Y$ are connected quandles, it is proved in \cite{AG} that 
there is a quandle isomorphism 
$\nu :  X \times S  \rightarrow Y$ 
for a set $S$, 
such that 
$\pi={\rm inn} ( \nu) $ 
for the projection $\pi: X \times S \rightarrow X$. 
The 
 quandle operation on $X \times S$ is defined by 
$$ (x, s)*(y, t) = (x*y, \beta_{x,y} (s) )\quad {\rm for } \quad (x, s), (y, t) \in X \times S, $$
for  some  $\beta: X^2 \rightarrow {\rm Sym}(S)$.
The proof of Theorem 7.1 of \cite{CSV} shows  that if 
the cardinality of $S$ is 2, then we can assume $S=\Z_2$ and $\beta_{x, y}(a) = a\, \phi (x,y)$
where $\phi$ is a $2$-cocycle with coefficient group $A=\Z_2$.  Hence ${\rm inn}$ is an abelian extension. 
\end{proof}

We say that an epimorphism  $f: Y \rightarrow X$  is {\it of  index $k$} if $|Y|/|X|=k$.
Note that if $Y$ is connected, then each fiber $f^{-1}(x)$, $x \in X$, has cardinality $k$.

\begin{corollary}\label{cor:rig}
The following Rig quandles have non-trivial second cohomology groups with the coefficient group $A=\Z_2$, yet 
give rise to   constant quandle $2$-cocycle invariants for any classical knot with  the corresponding non-trivial $2$-cocycles: 
$$
\begin{array}{llllllll}
  Q(6,1),&  Q(10,1), &  
Q(12, 5), & Q(12, 6), &  Q(12, 7), & Q(12, 8), & 
 Q(16,4), &
  Q(16,5), \\
  Q(16,6), & 
 Q(18, 1), &  
   Q(18, 2), &  
    Q(18, 8), &  
   Q(18, 9), &  Q(18, 10), & 
      Q(24, 3), &
      Q(24, 4), \\
    Q(24, 13), &  
 Q(24, 22),  &  
 Q(30, 2), &  Q(30, 7), &   
  Q(30, 8), & 
Q(40, 8), & 
 Q(40, 9), &    Q(40, 10), \\  
Q(42, 1), &  Q(42, 3), &  Q(42, 4), &  Q(42, 7), &  Q(42, 8). & & & 
\end{array}
$$
\end{corollary}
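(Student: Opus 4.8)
The plan is, for each quandle $X = Q(n,i)$ in the list, to produce a non-trivial $2$-cocycle $\phi\colon X\times X\to\Z_2$ for which the abelian extension $E(X,\Z_2,\phi)$ is isomorphic to ${\rm inn}(Z)$ for some quandle $Z$, and then to invoke Theorem~\ref{thm:main}, which at once gives that $\Phi_{(X,\Z_2,\phi)}(K)$ is constant for all classical knots $K$.

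The mechanism for producing $\phi$ and $Z$ is a double use of the inner representation. First one exhibits a connected quandle $Y$ with ${\rm inn}(Y)\cong X$ and $|Y|=2|X|$; by Lemma~\ref{lem:order2} the map ${\rm inn}\colon Y\to X$ is then an abelian extension, so $Y\cong E(X,\Z_2,\phi)$ for some $\phi$. This $\phi$ is automatically non-trivial in $H^2(X;\Z_2)$: if $\phi$ were a coboundary, $E(X,\Z_2,\phi)$ would be isomorphic to the trivial extension $X\times\Z_2$, which is disconnected, contradicting the connectivity of $Y$; in particular $H^2(X;\Z_2)\neq 0$, which may equally be read off from the Rig data \cite{rig}. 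Second, one exhibits $Y$ itself as an inner quandle, that is, produces a quandle $Z$ with ${\rm inn}(Z)\cong Y\cong E(X,\Z_2,\phi)$. What one is really after, then, is a connected quandle $Z$ whose second inner iterate ${\rm inn}({\rm inn}(Z))$ is isomorphic to $X$ and whose first iterate ${\rm inn}(Z)$ has order $2|X|$.

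For the entries of smaller order the quandle $Z$ can be located inside Rig: one runs through the connected Rig quandles, computes the chain $Z\mapsto{\rm inn}(Z)\mapsto{\rm inn}({\rm inn}(Z))$, tests connectivity and the index-$2$ condition at the last step, and records the base quandles $X$ so obtained together with the cocycle $\phi$ supplied by Lemma~\ref{lem:order2}. This is a finite verification, and the resulting quandles $Y$, $Z$ and the epimorphisms among them are the data assembled in Section~\ref{sec:rel}.

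The main obstacle is the large-order part of the list. For $X=Q(n,i)$ with $n\geq 24$ the extension $E(X,\Z_2,\phi)$ already has order $2n\geq 48$, and any quandle $Z$ with ${\rm inn}(Z)\cong E(X,\Z_2,\phi)$ must have order at least $2n$; such a $Z$ lies outside the Rig list and must be constructed by hand (for instance as a generalized Alexander quandle, or as a further abelian extension), after which one must verify that its inner quandle is precisely the prescribed $E(X,\Z_2,\phi)$. This case-by-case identification of the inner quandle of a hand-built $Z$ is the delicate step; once it is in place, the conclusion is a direct application of Theorem~\ref{thm:main} and Lemma~\ref{lem:order2}, together with the bookkeeping of the relevant cohomology classes recorded in Rig.
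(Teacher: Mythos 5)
Your proposal is correct and follows essentially the same route as the paper: the paper exhibits, for each listed $X$, a three-term (or four-term) chain of inner representations $Z\rightarrow Y\rightarrow X$ with $|Y|=2|X|$ (Tables~\ref{table:4term} and~\ref{table:short}, with quandles of order above 47 denoted $R(n,j)$ and constructed outside Rig, the leftmost being generalized Alexander quandles), so that Lemma~\ref{lem:order2} makes $Y\rightarrow X$ an abelian extension and Theorem~\ref{thm:main} applies. Your added remark that connectivity of $Y$ forces the cocycle to be non-trivial is a correct and worthwhile point that the paper leaves implicit.
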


\begin{proof}
Computer calculations show the following quandle sequences of Rig quandles:
$$ 
\begin{array}{rllll}
 Q(24,1)& \stackrel{{\rm inn}}{\longrightarrow} & Q(12,1) & \stackrel{{\rm inn}}{\longrightarrow} & Q(6,1),  \\
  Q(40,2) &\stackrel{{\rm inn}}{\longrightarrow} &  Q(20, 3)  & \stackrel{{\rm inn}}{\longrightarrow}&  Q(10,1) . \\
\end{array}
$$
For example, the first sequence shows that $Q(12, 1)$ is an abelian extension of $Q(6,1)$ by a cocycle $\phi$, 
and is 
the image of $Q(24,1)$ by ${\rm inn}$. Then  Lemma~\ref{lem:order2} and Theorem~\ref{thm:main}
imply that $\Phi_\phi (K)$ is constant for all classical knots $K$.
For the other quandles, the result follows in a similar fashion 
from  sequences of inner representations 
in Tables~\ref{table:4term} and~\ref{table:short}.
See \cite{Edwin2} for multiplication tables of quandles in Tables \ref{table:4term} and \ref{table:short}.
\end{proof}

\begin{table}[h]
$$
\begin{array}{lllllll}
R(192,2) & {\rightarrow } & R(48,3) & {\rightarrow } & Q(24,3) & {\rightarrow } & Q(12,6)  \\ 
R(192,3) & {\rightarrow } & R(48,4) & {\rightarrow } & Q(24,4) & {\rightarrow } & Q(12,5)   
\end{array}
$$
\caption{Four-term sequences of inner representations for small connected quandles}\label{table:4term}
\end{table}

\begin{table}[hbt]
$$
\begin{array}{lllllllllll}
R(64,1) & {\rightarrow } & Q(32,6) & {\rightarrow } & Q(16,4) &R(64,2) & {\rightarrow } & Q(32,7) & {\rightarrow } & Q(16,5)  \\ 
R(64,3) & {\rightarrow } & Q(32,8) & {\rightarrow } & Q(16,6) &R(64,4) & {\rightarrow } & Q(32,5) & {\rightarrow } & Q(16,4)  \\ 
R(64,5) & {\rightarrow } & Q(32,6) & {\rightarrow } & Q(16,4) &R(64,6) & {\rightarrow } & Q(32,5) & {\rightarrow } & Q(16,4)  \\ 
R(72,1) & {\rightarrow } & Q(36,21) & {\rightarrow } & Q(18,10) &R(72,2) & {\rightarrow } & Q(36,17) & {\rightarrow } & Q(18,8)  \\ 
R(72,3) & {\rightarrow } & Q(36,20) & {\rightarrow } & Q(18,9) &R(72,4) & {\rightarrow } & Q(36,4) & {\rightarrow } & Q(18,2)  \\ 
R(72,5) & {\rightarrow } & Q(36,1) & {\rightarrow } & Q(18,1) &R(96,1) & {\rightarrow } & Q(24,4) & {\rightarrow } & Q(12,5)  \\ 
R(96,2) & {\rightarrow } & Q(24,3) & {\rightarrow } & Q(12,6) &R(96,3) & {\rightarrow } & R(48,1) & {\rightarrow } & Q(24,22)  \\ 
R(96,4) & {\rightarrow } & Q(24,6) & {\rightarrow } & Q(12,9) &R(96,5) & {\rightarrow } & Q(24,6) & {\rightarrow } & Q(12,9)  \\ 
R(96,6) & {\rightarrow } & Q(24,6) & {\rightarrow } & Q(12,9) &R(96,7) & {\rightarrow } & Q(24,5) & {\rightarrow } & Q(12,8)  \\ 
R(96,8) & {\rightarrow } & Q(24,5) & {\rightarrow } & Q(12,8) &R(96,9) & {\rightarrow } & R(48,2) & {\rightarrow } & Q(24,22)  \\ 
R(120,1) & {\rightarrow } & Q(20,3) & {\rightarrow } & Q(10,1) &R(120,2) & {\rightarrow } & R(60,1) & {\rightarrow } & Q(30,8)  \\ 
R(120,3) & {\rightarrow } & R(60,2) & {\rightarrow } & Q(30,7) &R(120,4) & {\rightarrow } & R(60,3) & {\rightarrow } & Q(30,2)  \\ 
R(120,5) & {\rightarrow } & Q(30,1) & {\rightarrow } & Q(15,2) &R(160,1) & {\rightarrow } & R(80,1) & {\rightarrow } & Q(40,10)  \\ 
R(160,2) & {\rightarrow } & Q(40,20) & {\rightarrow } & Q(20,5) &R(160,3) & {\rightarrow } & Q(40,19) & {\rightarrow } & Q(20,6)  \\ 
R(160,4) & {\rightarrow } & R(80,2) & {\rightarrow } & Q(40,9) &R(168,1) & {\rightarrow } & R(84,1) & {\rightarrow } & Q(42,8)  \\ 
R(168,2) & {\rightarrow } & R(84,2) & {\rightarrow } & Q(42,3) &R(168,3) & {\rightarrow } & R(84,3) & {\rightarrow } & Q(42,7)  \\ 
R(168,4) & {\rightarrow } & R(84,4) & {\rightarrow } & Q(42,4) &R(168,5) & {\rightarrow } & R(84,5) & {\rightarrow } & Q(42,1)  \\ 
R(192,1) & {\rightarrow } & Q(24,14) & {\rightarrow } & Q(12,7) &R(192,4) & {\rightarrow } & R(48,5) & {\rightarrow } & Q(24,13)  \\ 
R(216,1) & {\rightarrow } & R(72,6) & {\rightarrow } & Q(24,21) &R(216,2) & {\rightarrow } & Q(36,17) & {\rightarrow } & Q(18,8)  \\ 
R(216,3) & {\rightarrow } & Q(36,1) & {\rightarrow } & Q(18,1) &
& 
\end{array}
$$
\caption{Three-term sequences of inner representations for small connected quandles}\label{table:short}
\end{table}

We remark that the above list contains all Rig quandles of order less than or equal to 16 that 
were conjectured in \cite{CSV} 
 to have  constant quandle $2$-cocycle invariants for any classical knot with  non-trivial $2$-cocycles 
except 
$Q(15,2)$, and $Q(15,7)$, 
which will be shown to have constant invariant in Section~\ref{sec:VC}.
Those in the above list of order larger than 16 do not  appear in the  conjectured  list in \cite{CSV}.

Tables~\ref{table:4term} and~\ref{table:short} contain sequences of connected quandles where all arrows represent inner representations. 
The quandle on the left of each sequence is a  generalized Alexander quandle, but others in the sequence may or may not be generalized Alexander quandles. The  right-most quandle in each sequence is faithful, so the sequences cannot be extended non-trivially to the right with inner representations.
The notation $R(n, j)$ is used to indicate a quandle of order $n$ when $n >  47$ and hence not a Rig quandle. The  index
$j$ is simply to distinguish non-isomorphic quandles.

\begin{remark}
{\rm
Terminating sequences of ${\rm inn}$, 
$$X=X_0 \stackrel{{\rm inn}}{\longrightarrow} X_1={\rm inn}(X_0) \stackrel{{\rm inn}}{\longrightarrow} \cdots
 \stackrel{{\rm inn}}{\longrightarrow} X_n={\rm inn}(X_{n-1}) , $$
 are discussed in \cite{AG},  where $X_n$ is faithful and $X_j$ are not, for $j=1, \ldots, n-1$. 
 For the 790 Rig quandles of order less than 48, 
there are 66 non-faithful quandles $X$, and all but two have  faithful images ${\rm inn}(X)$.
The two exceptions are the above first two Rig quandles $Q(24, 1)$ and $Q(40, 2)$
in Corollary~\ref{cor:rig}. 

On the other hand, Tables~\ref{table:4term} and~\ref{table:short} include many 
quandles $X$
with  ${\rm inn}(X)$ being non-faithful Rig quandles.
}
\end{remark}

 Let $X=Q(12, 5)$ or $Q(12,6)$.  Then the second quandle cohomology group $H^2_Q(X,\Z_4)$ is known \cite{rig}
 to be isomorphic to $\Z_4$. See \cite{CJKLS,CKS}, for example, for details on quandle cohomology.
Let $\psi: X \times X \rightarrow \Z_4$ be a $2$-cocycle which represents  
 a generator of $H^2_Q(X,\Z_4)\cong \Z_4$. 
 Let $u$ denote a multiplicative generator of $A=\Z_4$. 
The cocycle invariants $\Phi_\psi(K)$ for $X=Q(12, 5)$ or $Q(12,6)$ with respect to  $\psi$,  computed for some knots in the table  in \cite{rig} up to 9 crossing knots,  
contain non-constant values, while for $A=\Z_2$ the invariant is  constant by Corollary~\ref{cor:rig}.
This is explained by the following.

\begin{theorem}
Let $X$ be a quandle and $n, m, d>1$  be positive integers such that $n=md$. 
Let $\psi$ be a $2$-cocycle of $X$ with values in $\Z_n$, and 
 $\Phi_\psi(K)= \sum_{j=0}^{n-1} a_j(K)\,  u^j $ be the cocycle invariant of a knot $K$ with respect to $\psi$. 
 
Let $E=E(X, \Z_m, \phi) \stackrel{\alpha}{\rightarrow} X$ be the abelian extension 
corresponding to $\phi=\psi^d$,
and suppose that there is a sequence of quandles 
$Y \stackrel{\rm inn}{\longrightarrow} E \stackrel{\alpha}{\rightarrow} X$. 
Then $a_k (K)=0$ for all $k$ that are not divisible by  $m$,  for any classical knot $K$. 
\end{theorem}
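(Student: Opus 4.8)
The plan is to reduce the statement about the $\Z_n$-valued invariant $\Phi_\psi$ to an application of Theorem~\ref{thm:main} for a $\Z_m$-valued cocycle, by analyzing how a coloring of the $1$-tangle $T$ of $K$ by $X$ lifts through the tower $Y \stackrel{\rm inn}{\to} E \stackrel{\alpha}{\to} X$. First I would fix a coloring $C\colon {\cal A}(T)\to X$ of the $1$-tangle $T$ associated with $K$, with $C(b_0)=C(b_1)=x$ (possible since $K$ is classical, so all colorings of $T$ close up). Since $\alpha$ and ${\rm inn}$ are coverings and $K$ is classical, Lemma~\ref{lem:cover} lifts $C$ to $C_E\colon{\cal A}(T)\to E$, which in turn lifts to $C_Y\colon{\cal A}(T)\to Y$; then Theorem~\ref{thm:main} (applied to the abelian extension $E=E(X,\Z_m,\phi)$ with $\phi=\psi^d$, which by hypothesis is the image ${\rm inn}(Y)$) tells us $\Phi_{(X,\Z_m,\phi)}(K)$ is constant. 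The core of the argument is to translate "$\Phi_\phi(K)$ constant" into the vanishing statement "$a_k(K)=0$ unless $m\mid k$".

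The bridge is the relation $\phi=\psi^d$ between the two cocycles and the quotient map $q\colon \Z_n=\langle u\rangle \to \Z_m$, $u\mapsto u^d$ has kernel $m\Z_n$ — more precisely, $q$ should be realized so that for every coloring $C$ of $T$ by $X$ one has $B_\phi(T,C)=B_\psi(T,C)^d = q\big(B_\psi(T,C)\big)$, simply because raising each crossing weight $\psi(x_\tau,y_\tau)^{\epsilon(\tau)}$ to the $d$-th power commutes with taking the product over crossings. Summing over all colorings $C$ of $X$ that close up (i.e. the colorings of $K$), the $\Z_m$-invariant $\Phi_\phi(K)$ is the image of $\Phi_\psi(K)$ under the induced ring map $\Z[\Z_n]\to\Z[\Z_m]$ sending $u^j\mapsto u^{jd \bmod m}$; wait — I should be careful and instead use the quotient $\Z_n\to\Z_n/m\Z_n\cong\Z_d$ induced by $u\mapsto u$, under which the $\Z_d$-valued cocycle $\bar\psi$ satisfies $\bar\psi = $ reduction of $\psi$, and note $\phi=\psi^d$ corresponds to multiplying exponents by $d$; the key point either way is that constancy of the reduced invariant forces $\sum_{k\not\equiv 0\,(m)} a_k(K) = 0$ after grouping exponents into residue classes mod $m$, and since all the $a_k(K)$ are nonnegative integers (they count colorings), each individual $a_k(K)$ with $m\nmid k$ must vanish.

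So the steps, in order: (1) set up the $1$-tangle $T$ of the classical knot $K$ and the lifting tower $Y\to E\to X$, invoking Lemma~\ref{lem:cover} twice; (2) apply Theorem~\ref{thm:main} to conclude $\Phi_{(X,\Z_m,\phi)}(K)$ is constant, where $\phi=\psi^d$; (3) express $\Phi_\phi(K)$ as the image of $\Phi_\psi(K)$ under the ring homomorphism $\Z[\Z_n]\to\Z[\Z_m]$ induced by the surjection $\Z_n\to\Z_m$ that carries the class of $u^d$ appropriately, using $\phi=\psi^d$ and the multiplicativity of the Boltzmann weight $B$; (4) read off from constancy that $\sum a_k(K)=0$ over each nonzero coset of $m\Z_n$ in $\Z_n$, and conclude $a_k(K)=0$ for $m\nmid k$ by nonnegativity of the coefficients $a_k(K)\ge 0$.

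The main obstacle I anticipate is step (3): pinning down exactly which surjection $\Z_n\to\Z_m$ is induced and verifying that "$\phi=\psi^d$" — a relation at the level of cocycles — does produce precisely the expected grouping of the coefficients $a_j(K)$ into residue classes, i.e. that $a_k(\text{constant invariant of }\phi)$ relates to $\sum_{j\equiv k} a_j(K)$ with matching indexing of generators. Care is needed because "multiplying the cocycle by (raising to the power) $d$" changes the coefficient group's generator, and one must make sure the identification of $H^2_Q(X,\Z_n)\to H^2_Q(X,\Z_m)$ induced by $\Z_n\to\Z_m$ actually sends $[\psi]$ to $[\phi]$ (this is where the hypothesis that $\phi$ is realized as $\psi^d$ and that the abelian extension $E(X,\Z_m,\phi)$ exists as stated is used). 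Once that bookkeeping is done, the nonnegativity argument in step (4) is immediate.
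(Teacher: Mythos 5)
Your proposal is correct and follows essentially the same route as the paper: lift the coloring through $Y \to E \to X$, invoke Theorem~\ref{thm:main} to get constancy of $\Phi_\phi$ with $\phi=\psi^d$, use the crossing-wise identity $B_\phi(T,C)=B_\psi(T,C)^d$ to group the coefficients $a_j(K)$ by residue class mod $m$, and finish by nonnegativity of the $a_j(K)$. (The only slip is the aside identifying $\Z_n/m\Z_n$ with $\Z_d$ --- since $|m\Z_n|=d$ that quotient is $\Z_m$ --- but your step (4) states the correct grouping over the nonzero cosets of $m\Z_n$, which is all that is needed.)
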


\begin{proof}
For each coloring $C: {\cal A}(T) \rightarrow X$,
the weight $\phi(x_\tau, y_\tau )^{\epsilon (\tau) }$  at a crossing $\tau$ 
for $\phi$
is equal to $\psi(x_\tau, y_\tau )^{d \epsilon (\tau) }$. Hence if 
$B_\psi(T, C)= u^j$, then we have $B_\phi(T, C)= u^{dj}$. 
Then 
\begin{eqnarray*}
\Phi_\phi(K) &=&  \sum_{j=0}^{n-1} a_j(K)\, u^{dj} \\
&=& [ \, \sum_{k=0}^{d-1} a_{mk} (K)  \, ] \, u^0 +
[ \, \sum_{k=0}^{d-1} a_{1+mk} (K)  \, ] \, u^d + \cdots \\
& & + \  [ \, \sum_{k=0}^{d-1} a_{\ell + mk} (K)  \, ] \, u^{d \ell}  + \cdots + \,
[ \, \sum_{k=0}^{d-1} a_{(d-1)d + mk} (K)  \, ] \, u^{ d (d-1) } .
\end{eqnarray*}
By Theorem~\ref{thm:main}, this is equal to a constant,  $\sum_{k=0}^{d-1} \, [ \, a_{mk} (K)  \, ] \, u^0$. 
Since $a_j(K)$ are non-negative integers, 
the result follows.
\end{proof}

The following solves a conjecture stated in \cite{CSV}.

\begin{corollary}\label{cor:Z4}
Let $X=Q(12, 5)$ or $Q(12,6)$, and $\psi: X \times X \rightarrow A=\Z_4$ be a $2$-cocycle which represents  
 a generator of $H^2_Q(X,\Z_4)\cong \Z_4$. Let $\Phi_\psi(K)= \sum_{j=0}^3 a_j(K)\, u^j \in \Z[A]$ be the cocycle invariant.
Then $a_1(K)=a_3(K)=0$ for any classical knot $K$.
\end{corollary}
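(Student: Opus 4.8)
The plan is to derive Corollary~\ref{cor:Z4} as a direct specialization of the preceding theorem. Taking $n=4$, $m=d=2$, and $\psi$ a chosen $2$-cocycle representing a generator of $H^2_Q(X,\Z_4)\cong\Z_4$, I would set $\phi=\psi^2$, so that $E=E(X,\Z_2,\phi)\stackrel{\alpha}{\rightarrow}X$ is an abelian extension of $X$ by $\Z_2$. The only genuine input needed beyond the theorem is the existence of a quandle $Y$ with a sequence $Y\stackrel{\rm inn}{\longrightarrow}E\stackrel{\alpha}{\rightarrow}X$; granting that, the theorem immediately gives $a_k(K)=0$ for all $k$ not divisible by $m=2$, i.e. $a_1(K)=a_3(K)=0$ for any classical knot $K$.

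So the substantive step is to verify the hypothesis $E\cong{\rm inn}(Y)$ for some $Y$, for each of $X=Q(12,5)$ and $X=Q(12,6)$. Here I would appeal to Tables~\ref{table:4term} and~\ref{table:short}: the sequences $R(192,2)\rightarrow R(48,3)\rightarrow Q(24,3)\rightarrow Q(12,6)$ and $R(192,3)\rightarrow R(48,4)\rightarrow Q(24,4)\rightarrow Q(12,5)$ (equivalently the three-term sequences involving $R(96,1),R(96,2)$) record, by computer calculation, that $Q(24,3)$ (resp. $Q(24,4)$) maps by ${\rm inn}$ onto $Q(12,6)$ (resp. $Q(12,5)$), and that each such ${\rm inn}$ is an index-$2$ epimorphism of connected quandles, hence an abelian extension by $\Z_2$ via Lemma~\ref{lem:order2}. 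One then has to check that this $\Z_2$-extension is the one given by $\psi^2$ — that is, that $[\psi^2]$ is the class realizing $Q(24,3)$ (resp. $Q(24,4)$) as an abelian extension of $Q(12,6)$ (resp. $Q(12,5)$). Since $H^2_Q(X,\Z_4)\cong\Z_4$ and $\psi$ generates it, the mod-$2$ reduction (which is what $\psi\mapsto\psi^2$ amounts to under the multiplicative notation $A=\Z_4=\langle u\rangle$, $u^2$ of order $2$) generates the image of $H^2_Q(X,\Z_4)\to H^2_Q(X,\Z_2)$; one verifies this image is the nontrivial $\Z_2$ detected by the index-$2$ cover above, so $E(X,\Z_2,\psi^2)$ is exactly $Q(24,3)$ or $Q(24,4)$, which lies under an ${\rm inn}$ from a larger connected quandle by the table. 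This makes the sequence $Y\stackrel{\rm inn}{\longrightarrow}E\stackrel{\alpha}{\rightarrow}X$ explicit.

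I would therefore write the proof as: fix $X\in\{Q(12,5),Q(12,6)\}$ and $\psi$ generating $H^2_Q(X,\Z_4)$; apply the theorem with $n=4$, $m=d=2$, $\phi=\psi^2$; note $E(X,\Z_2,\phi)$ is (isomorphic to) $Q(24,4)$ or $Q(24,3)$ respectively, and that the tables exhibit a connected quandle mapping onto it by ${\rm inn}$, so the hypothesis of the theorem holds; conclude $a_k(K)=0$ for $k$ odd, i.e. $a_1(K)=a_3(K)=0$. The main obstacle — and the one place care is required — is the identification step: confirming that the particular $\Z_2$-extension arising from $\psi^2$ coincides (up to isomorphism over $X$) with the $\Z_2$-extension $Q(24,3)\to Q(12,6)$ or $Q(24,4)\to Q(12,5)$ recorded in the tables, rather than some other (necessarily trivial, by the $\Z_4$ computation) class. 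Everything else is bookkeeping with the already-proven Theorem and Lemmas~\ref{lem:order2}, and the cited Rig computations.
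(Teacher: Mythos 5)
Your proposal is correct and follows essentially the same route as the paper: the corollary is obtained by specializing the preceding theorem with $n=4$, $m=d=2$, $\phi=\psi^2$, with the hypothesis $E(X,\Z_2,\psi^2)\cong{\rm inn}(Y)$ supplied by the computational sequences $R(48,3)\rightarrow Q(24,3)\rightarrow Q(12,6)$ and $R(48,4)\rightarrow Q(24,4)\rightarrow Q(12,5)$ together with Lemma~\ref{lem:order2}. The identification step you flag (that the $\Z_2$-extension determined by $\psi^2$ is indeed $Q(24,3)$ resp.\ $Q(24,4)$) is a real point, which the paper settles by citing the computations in \cite{Edwin} and \cite{rig} rather than by argument.
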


This situation is also found for $X=Q(18,1)$ or $Q(18,8)$, where $H^2_Q(X,\Z_6)\cong \Z_6$. 
Let $u$ be a multiplicative generator of $A=\Z_6$. Then the invariant values are restricted to the following form.

\begin{corollary}\label{cor:Z6}
Let $X=Q(18, 1)$ or $Q(18,8)$, and $\psi: X \times X \rightarrow \Z_6$ be a $2$-cocycle which represents  
 a generator of $H^2_Q(X,\Z_6)\cong \Z_6$. Let $\Phi_\psi(K)= \sum_{j=0}^5 a_j(K)\, u^j \in \Z[A]$ be the cocycle invariant.
Then $a_k(K)=0$ for $k=1,3,5$ for any classical knot $K$.
\end{corollary}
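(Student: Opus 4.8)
The plan is to deduce Corollary~\ref{cor:Z6} as a direct instance of the preceding Theorem, exactly as Corollary~\ref{cor:Z4} was deduced. First I would record the arithmetic setup: take $n=6$, and since we want to kill the odd-indexed coefficients we choose $m=3$ and $d=2$, so that $n=md$ with $n,m,d>1$ as required. Then $\phi=\psi^d=\psi^2$ is a $2$-cocycle of $X$ with values in $\Z_3$ (the subgroup generated by $u^2$, which we identify with $\Z_3$), and the associated abelian extension is $E=E(X,\Z_3,\phi)\stackrel{\alpha}{\to}X$.

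The one genuine hypothesis of the Theorem that must be verified for $X=Q(18,1)$ and $X=Q(18,8)$ is the existence of a quandle $Y$ with a sequence $Y\stackrel{\rm inn}{\longrightarrow}E\stackrel{\alpha}{\rightarrow}X$, i.e.\ that the relevant $\Z_3$-extension $E$ arises as $\mathrm{inn}(Y)$ for some connected quandle $Y$. I would obtain this from the computer-verified sequences of inner representations already used for Corollary~\ref{cor:rig}: the rows $R(72,5)\to Q(36,1)\to Q(18,1)$ and $R(216,3)\to Q(36,1)\to Q(18,1)$ in Table~\ref{table:short}, together with $R(72,2)\to Q(36,17)\to Q(18,8)$ and $R(216,2)\to Q(36,17)\to Q(18,8)$, show that $Q(36,1)$ (resp.\ $Q(36,17)$) is $\mathrm{inn}$ of a connected quandle and maps onto $Q(18,1)$ (resp.\ $Q(18,8)$) by $\mathrm{inn}$; by Lemma~\ref{lem:order2} this index-$2$ map $\mathrm{inn}$ is an abelian extension by a $\Z_2$-cocycle. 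What remains is to identify this $\Z_2$-extension, composed appropriately, with the $\Z_3$-extension $E(X,\Z_3,\psi^2)$ — more precisely, to check that the cocycle $\psi^2$ (reduction mod $3$ of a generator of $H^2_Q(X,\Z_6)$) represents the class whose extension is covered by an $\mathrm{inn}$-image. Here I would invoke the known structure $H^2_Q(X,\Z_6)\cong\Z_6\cong\Z_2\times\Z_3$, so that $\psi$ decomposes into a $\Z_2$-part and a $\Z_3$-part, and $\psi^2=\psi^d$ picks out (three times) the $\Z_3$-part; the extension $E(X,\Z_3,\psi^2)$ is then the $3$-fold cover corresponding to the order-$3$ summand, which is the one appearing over $Q(36,1)$ or $Q(36,17)$ in the quandle sequences.

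Granting this identification, the Theorem applies verbatim with $n=6$, $m=3$, $d=2$, yielding $a_k(K)=0$ for all $k$ not divisible by $m=3$, i.e.\ for $k=1,2,4,5$. Since the Corollary only claims $a_k(K)=0$ for $k=1,3,5$, I would then handle $k=3$ (and re-derive $k=1,5$) by a second, symmetric application: take instead $m=2$, $d=3$, so $\phi=\psi^3$ has values in $\Z_2$ and $E=E(X,\Z_2,\psi^3)$ is the $2$-fold cover corresponding to the order-$2$ summand of $H^2_Q(X,\Z_6)$ — which is precisely the index-$2$ extension $Q(36,1)\to Q(18,1)$ (resp.\ $Q(36,17)\to Q(18,8)$) from Table~\ref{table:short}, and this is $\mathrm{inn}(Y)$ for $Y=R(72,5)$ or $R(72,2)$. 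The Theorem then gives $a_k(K)=0$ for $k$ not divisible by $2$, i.e.\ for $k=1,3,5$, which is exactly the assertion. Combining the two applications (or just using the second one) completes the proof.

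I expect the main obstacle to be the middle step: cleanly matching the abstract cocycle $\psi^d$ with the concrete extensions tabulated as $\mathrm{inn}$-sequences. The decomposition $H^2_Q(X,\Z_6)\cong\Z_2\times\Z_3$ is essential, and one must check that a generator $\psi$ of $\Z_6$ maps, under multiplication by $d$, onto a generator of the complementary summand whose extension is the one covered by an inner representation — this is a bookkeeping fact about which extension of $Q(18,1)$ and $Q(18,8)$ is realized as $\mathrm{inn}(Y)$, and it rests on the computer data in Table~\ref{table:short} rather than on a general principle. Everything else is the same formal argument already used for Corollaries~\ref{cor:rig} and~\ref{cor:Z4}.
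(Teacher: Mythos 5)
Your second application of the Theorem preceding Corollary~\ref{cor:Z4} --- taking $m=2$, $d=3$, so that $\phi=\psi^{3}$ takes values in the order-$2$ subgroup of $\Z_6$ and $E=E(X,\Z_2,\psi^{3})$ is the index-$2$ abelian extension $Q(36,1)\rightarrow Q(18,1)$ (resp.\ $Q(36,17)\rightarrow Q(18,8)$), realized as ${\rm inn}(R(72,5))$ (resp.\ ${\rm inn}(R(72,2))$) via Table~\ref{table:short} and Lemma~\ref{lem:order2} --- is exactly the intended argument, and since you note that this application alone already yields $a_k(K)=0$ for all $k$ not divisible by $2$, i.e.\ for $k=1,3,5$, your proposal does establish the corollary.

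However, your first application ($m=3$, $d=2$) is wrong and should be deleted rather than offered as an alternative. The extension $E(X,\Z_3,\psi^{2})$ has index $3$ over $X$ and hence order $54$; it cannot be identified with $Q(36,1)$ or $Q(36,17)$, which are the index-$2$ extensions. All entries of Table~\ref{table:short} arise from Lemma~\ref{lem:order2} and concern index-$2$ maps, so nothing in the tables exhibits this order-$54$ quandle as ${\rm inn}(Y)$ for any $Y$, and the hypothesis of the Theorem is simply not verified for $m=3$. Worse, if that application were valid it would give $a_k(K)=0$ for $k=1,2,4,5$, which together with the second application would force $\Phi_\psi(K)$ to be constant for all classical knots; the paper's computations record non-constant values of this invariant for $Q(18,8)$, so the first application cannot be repaired. (The same slip appears in your opening sentence: to annihilate the coefficients whose index is not divisible by $2$ one must take $m=2$, not $m=3$, since the Theorem's conclusion is that $a_k(K)=0$ for $k$ not divisible by $m$.)
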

 
 The cocycle invariant for connected quandles of order 18 are computed 
 in \cite{rig} for up to 7 crossing knots at the time of writing.  The invariant values for $Q(18,8)$ do contain non-constant values.
 For $Q(18,1)$, the invariant is constant, and we do not know whether this is an artifact of limited number of knots or it is constant 
 for all classical knots.

\bigskip

Theorem~\ref{thm:main} can be applied contrapositively:
if $E \rightarrow X$ is an abelian extension with a $2$-cocycle $\phi$ such that 
$\Phi_\phi(K)$ is not  constant for a classical knot $K$, then 
there is no finite quandle $Y$ such that ${\rm inn}(Y)=E$ for the inner representation ${\rm inn}$.
Thus we obtain the following from  \cite{Edwin} and \cite{rig}. 
The list contains abelian extension $E(X,\Z_2, \phi)$  such that $\Phi_\phi(K)$ is non-constant for some knot $K$ in the table.
The list is from the information available at the time of writing.

\begin{corollary}\label{cor:no-inn}
For the following Rig quandles $E$, there is no
 finite quandle $Y$ such that ${\rm inn}(Y)=E$ for the inner representation ${\rm inn}$:
$$
Q(8,1),  \ 
Q(24,1), \ 
Q(24, 7), \ 
Q(32, 1). \ 
$$
\end{corollary}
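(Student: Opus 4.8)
The plan is to deduce this directly from Theorem~\ref{thm:main} read contrapositively, as anticipated in the paragraph just above the statement. The first step is, for each quandle $E$ in the list, to extract from the data of \cite{Edwin} and \cite{rig} a presentation of $E$ as an abelian extension $E = E(X,\Z_2,\phi) \stackrel{\alpha}{\longrightarrow} X$ over an appropriate smaller connected quandle $X$, together with the non-trivial class in $H^2_Q(X,\Z_2)$ represented by $\phi$; these $E$ are exactly the ones tabulated there as $\Z_2$-extensions whose cocycle invariant fails to be constant.

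The second step is to record, again from the tabulated computations in \cite{Edwin,rig}, that for each such pair $(X,\phi)$ there is a \emph{classical} knot $K$ occurring in the knot table for which the invariant $\Phi_{(X,\Z_2,\phi)}(K) = \sum_{a\in\Z_2} n_a\, a \in \Z[\Z_2]$ is non-constant, i.e.\ $n_a \neq 0$ for the non-identity $a \in \Z_2$. Since every knot in the table is classical, this is a finite verification that requires no new argument.

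The third step is the contradiction. Suppose, for one of these $E$, that there existed a finite quandle $Y$ with ${\rm inn}(Y) = E$ (equivalently ${\rm inn}(Y) \cong E$ as quandles); since all quandles here are finite, this is the only case to consider. Then $E = E(X,\Z_2,\phi)$ is an abelian extension with $E \cong {\rm inn}(Y)$, so Theorem~\ref{thm:main} would force $\Phi_{(X,\Z_2,\phi)}(K)$ to be constant for every classical knot $K$, contradicting the second step. Hence no such $Y$ exists, for each $E$ in the list.

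The one genuine obstacle is not mathematical but lies in supplying the two external inputs correctly: matching each of $Q(8,1)$, $Q(24,1)$, $Q(24,7)$, $Q(32,1)$ with its base quandle $X$ and the cocycle class in $H^2_Q(X,\Z_2)$, and certifying from the tables that a non-constant invariant value genuinely occurs at some classical knot. Both of these rest on the computations of \cite{Edwin} and \cite{rig} rather than on anything proved in this paper, so the proof itself is a short application of Theorem~\ref{thm:main}.
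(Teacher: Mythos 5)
Your proposal is correct and is exactly the argument the paper uses: it reads Theorem~\ref{thm:main} contrapositively, citing the computations in \cite{Edwin} and \cite{rig} that each listed $E$ is an abelian extension $E(X,\Z_2,\phi)$ whose invariant $\Phi_\phi$ is non-constant on some classical knot in the tables. Nothing is missing beyond the external computational inputs, which the paper likewise defers to those references.
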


The arguments in Corollary~\ref{cor:Z4} can be applied contrapositively to obtain the following as well.

\begin{corollary}\label{cor:noY}
Let $E$ be one of the Rig quandles $ Q(12,2)$, 
 $Q(36,1)$ or 
$Q(32,9)$. Then 
 there is no finite quandle $Y$ such that ${\rm inn}(Y)=E$ for the inner representation ${\rm inn}$.
 \end{corollary}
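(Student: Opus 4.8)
The plan is to mimic the contrapositive application of Theorem~\ref{thm:main} exactly as in Corollary~\ref{cor:no-inn}, but now running the argument through the refinement used in Corollary~\ref{cor:Z4}. Suppose $E$ is one of the listed Rig quandles $Q(12,2)$, $Q(36,1)$ or $Q(32,9)$, and suppose for contradiction that there is a finite quandle $Y$ with ${\rm inn}(Y)=E$. The idea is that each of these $E$ is realized as an abelian extension $E=E(X,\Z_m,\phi)$ of a smaller quandle $X$, where $\phi=\psi^d$ for a $2$-cocycle $\psi$ taking values in $\Z_n$ ($n=md$) that represents a generator of $H^2_Q(X,\Z_n)$. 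If such a $Y$ existed, we would have a sequence $Y \stackrel{\rm inn}{\longrightarrow} E \stackrel{\alpha}{\rightarrow} X$, so the hypotheses of the theorem preceding Corollary~\ref{cor:Z4} (the one with the $a_k(K)$ conclusion) would be satisfied. That theorem then forces $a_k(K)=0$ for all $k$ not divisible by $m$, for every classical knot $K$. But the computer data in \cite{rig} (and \cite{Edwin}) exhibits, for each of these $X$ and the relevant generator $\psi$, a classical knot $K$ in the knot table for which $\Phi_\psi(K)$ has a nonzero coefficient $a_k(K)$ with $m \nmid k$. This contradiction establishes the corollary.

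First I would identify, for each of $Q(12,2)$, $Q(36,1)$, $Q(32,9)$, the data $(X,n,m,d,\psi)$ realizing it as the abelian extension $E(X,\Z_m,\psi^d)$: presumably $Q(12,2)$ arises from $X=Q(6,?)$ with $A=\Z_4$, $m=2$, $d=2$; $Q(36,1)$ and $Q(32,9)$ similarly from order-$18$ and order-$16$ base quandles with $\Z_4$ or $\Z_6$ coefficients, paralleling the appearance of $Q(36,1)$ in Table~\ref{table:short} and of the $\Z_6$ situation for $Q(18,1)$. Next I would cite the cohomology computation $H^2_Q(X,\Z_n)\cong\Z_n$ from \cite{rig} and fix $\psi$ a generator; then note $\phi=\psi^d$ is a $2$-cocycle with values in $\Z_m$ whose extension is isomorphic to $E$. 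Then I would invoke the theorem before Corollary~\ref{cor:Z4}: the existence of $Y$ with ${\rm inn}(Y)=E$ gives the sequence $Y\to E\to X$, whence $a_k(K)=0$ whenever $m\nmid k$ for all classical $K$. Finally I would point to the explicit table of invariant values in \cite{rig} (as already done for $Q(18,8)$ in the text) to exhibit a knot violating this, completing the contradiction.

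The main obstacle is not any deep argument — the logical skeleton is forced by the already-proved theorem — but rather the bookkeeping: one must verify that each of the three named Rig quandles genuinely occurs as the abelian extension associated to a \emph{generating} cocycle $\psi^d$ of the appropriate $H^2_Q(X,\Z_n)$ (not merely some cocycle), and that the recorded non-constant invariant value really has its nonvanishing coefficient in a degree not divisible by $m$; if the non-constancy occurred only in degrees divisible by $m$ the argument would fail. This is exactly the subtlety that distinguishes this corollary from Corollary~\ref{cor:no-inn}, where plain non-constancy of a $\Z_2$-invariant suffices. I would therefore spend most of the write-up pinning down these identifications precisely, referencing \cite{rig} and \cite{Edwin} for the multiplication tables, the cohomology groups, and the tabulated invariant values, and then state that the contradiction with the cited theorem is immediate.
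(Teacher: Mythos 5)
Your proposal is correct and follows essentially the same route as the paper: the paper likewise argues contrapositively via the theorem preceding Corollary~\ref{cor:Z4}, taking the base quandles to be $Q(6,2)$, $Q(16,1)$, $Q(16,7)$ with $H^2_Q(X,\Z_4)\cong\Z_4$ and $E$ the index-2 extension corresponding to $\psi^2$, then citing \cite{Edwin} and \cite{rig} for a knot with $a_1(K)\neq 0$ or $a_3(K)\neq 0$. The subtlety you flag --- that mere non-constancy is not enough and the violation must occur in an odd-degree coefficient --- is exactly the point on which the paper's citation of the computed data rests.
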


\begin{proof}
Each of the Rig quandles $Q(6,2)$, $Q(16,1)$, $Q(16, 7)$ has the second cohomology group isomorphic to $\Z_4$.
It is computed in \cite{Edwin} that the  index 2 abelian extensions corresponding to $\psi^2$ for a 2-cocycle $\psi$ representing a generator of 
the second cohomology group are, respectively,  $Q(12,2)$, 
$ Q(36,1)$, and 
$Q(32,9)$.

Let $E$ be one of $Q(12,2)$, 
$ Q(36,1)$, and 
$Q(32,9)$. If there is a finite quandle $Y$ such that ${\rm inn}(Y)=E$, then by the proof of 
Corollary~\ref{cor:Z4}, the cocycle invariant  $\Phi_\psi(K)= \sum_{j=0}^{3} a_j(K)\, u^{j} $
satisfies $a_1(K)=a_3(K)=0$ for all classical knots $K$.
However, this is not the case 
from \cite{rig}, and the result follows.
\end{proof}

\begin{remark}
{\rm
Similar   arguments apply  to 
other Rig quandles for $Q$ with cyclic second cohomology groups,  
but the corresponding index 2 abelian extensions go out of bounds 
of Rig quandles, and we are not able to specify the quandles with this property.
However, we can conclude that 
for   each  Rig quandle $Q$  in the list given below, 
there is a non-trivial abelian extension
$\alpha: X \rightarrow Q$  of index $2$  with the property that 
there is no finite quandle $Y$ such that ${\rm inn}(Y)=X$ for the inner representation ${\rm inn}$:
$$
  Q(18,3), \
Q(18,6), \  Q(18,7), \
Q(12, 10), \ 
Q(12, 3),\ 
Q(18,5). 
$$
}
\end{remark}

\section{$\Phi_{(X,A,\phi)}(K)$ is constant if $E(X,A,\phi)$ 
is a conjugation quandle} \label{sec:conj}

Let $G$ be a finite group. For $a,b \in G$ we write $a^b = b^{-1}ab$ and denote the conjugacy class of $G$ containing $x$ by $x^G$.  
The conjugacy class $x^G$ under conjugation, $a*b = a^b$, is a quandle.  
Here we call such a quandle a {\it conjugation quandle}.  
We note that such a quandle need not be connected. 
In general, a subquandle of a group $G$ under conjugation need not be a conjugacy class.
But  it is easy to see that if
$X$ is a subset of a group $G$ closed under conjugation and if $X$ under conjugation is
a connected quandle then $X$ is a conjugacy class of the group $\langle X \rangle$ generated by $X.$
Note that we are only interested in this paper in the case where both $X$ and $G$ are finite.

We show in this section that a connected quandle $E$ satisfies the condition in Theorem~\ref{thm:main} if and only if $E$ is a conjugation quandle.

To simplify the proof of the next theorem we will need the following 
lemma.

\begin{lemma} \label{lem:triangle} Suppose $A$, $B$, and $C$ are quandles, $f$ and $g$ are quandle epimorphisms and $h$ is a well-defined bijection such that $g = hf$, 
then $h$ is a quandle isomorphism.
\end{lemma}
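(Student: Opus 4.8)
The plan is to verify directly that the given bijection $h$ intertwines the two quandle operations; since a bijective quandle homomorphism is automatically a quandle isomorphism, this is all that is needed. The only structural input is the surjectivity of $f$, which lets us lift elements of $B$ to elements of $A$ and transfer the multiplicativity of $f$ and $g$ across the triangle $g = hf$.

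Concretely, I would fix arbitrary $b_1, b_2 \in B$ and, using that $f$ is onto, pick $a_1, a_2 \in A$ with $f(a_1) = b_1$ and $f(a_2) = b_2$. Then
$$
h(b_1 *_B b_2) = h\bigl(f(a_1) *_B f(a_2)\bigr) = h\bigl(f(a_1 *_A a_2)\bigr) = g(a_1 *_A a_2) = g(a_1) *_C g(a_2) = h(b_1) *_C h(b_2),
$$
where the second equality uses that $f$ is a quandle homomorphism, the third uses $g = hf$, and the fourth uses that $g$ is a quandle homomorphism. Since $b_1, b_2$ were arbitrary, $h$ is a quandle homomorphism.

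Finally I would note that $h$ is bijective by hypothesis, and invoke the standard fact that the set-theoretic inverse of a bijective quandle homomorphism is again a quandle homomorphism: applying $h^{-1}$ to $h\bigl(h^{-1}(c_1) *_B h^{-1}(c_2)\bigr) = h(h^{-1}(c_1)) *_C h(h^{-1}(c_2)) = c_1 *_C c_2$ gives $h^{-1}(c_1) *_B h^{-1}(c_2) = h^{-1}(c_1 *_C c_2)$. Hence $h$ is a quandle isomorphism. There is essentially no obstacle here; the one point that deserves a moment's attention is that every pair in $B \times B$ genuinely lifts to a pair in $A \times A$, which is exactly where the surjectivity of $f$ is used, and injectivity of $h$ is needed only to upgrade "homomorphism" to "isomorphism," not for the multiplicativity computation itself.
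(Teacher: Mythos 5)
Your proof is correct and follows essentially the same argument as the paper: lift the two elements through the surjection $f$, use $g=hf$ and the multiplicativity of $f$ and $g$ to show $h$ is a homomorphism, and conclude from bijectivity. The only difference is that you explicitly verify that $h^{-1}$ is also a homomorphism, a standard step the paper leaves implicit.
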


\begin{proof} Let $a_1, a_2 \in A$. Since $f$ is an epimorphism there exist $c_1,c_2 \in B$ such that $f(c_1) = a_1$ and $f(c_2) = a_2$. Then using $g = hf$ we have
 $h(a_1*a_2) = h(f(c_1)*f(c_2)) =
h(f(c_1*c_2)) = g(c_1*c_2) = g(c_1)*g(c_2) = hf(c_1)*hf(c_2) = h(a_1)*h(a_2)$.
\end{proof}

\begin{theorem}\label{thm:conjugation_criterion} If $X$ is a conjugation quandle, then $X \cong {\rm inn}(Y)$ for some quandle $Y$.
\end{theorem}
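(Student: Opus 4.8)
The plan is to exhibit an explicit quandle $Y$ whose inner representation image is (isomorphic to) the given conjugation quandle $X$. Suppose $X \subseteq G$ is a conjugation quandle, i.e.\ $X$ is a union of conjugacy classes of a finite group $G$ with $a*b = b^{-1}ab$; without loss of generality we may assume $G = \langle X \rangle$. The natural candidate is $Y = X$ itself, viewed as a conjugation quandle, and we want to show $\mathrm{inn} : X \to \mathrm{inn}(X)$ realizes $X$ as the inner representation image of \emph{some} quandle — but of course $\mathrm{inn}(X)$ may be a proper quotient of $X$ (exactly when $X$ is not faithful), so this does not immediately work. Instead, the idea is to enlarge: take $Y$ to be a conjugation quandle sitting inside a larger group that \emph{does} surject onto $X$ under $\mathrm{inn}$. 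A clean way to do this is to use $G$ acting on itself: consider the group $G$ and the quandle $\widetilde{X} \subseteq \mathrm{Inn}(G) \le G/Z(G)$ consisting of the images of the elements of $X$ under conjugation; then one checks that $\mathrm{inn}(\widetilde X)$ recovers $X$ — or rather, one should run the argument in the direction that makes the fibers match up. The real content is bookkeeping about which conjugation quandle to pick so that $\mathrm{inn}$ lands on $X$ on the nose.

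Concretely, here is the step I would carry out. First, observe that for any conjugation quandle $X \subseteq G$, the right translation $R_a$ on $X$ is the restriction to $X$ of conjugation by $a$; hence the map $X \to \mathrm{Inn}(G)$ sending $a \mapsto (\text{conjugation by } a)$ descends through $\mathrm{inn}$, and $\mathrm{inn}(X)$ is naturally the set $\{\, c_a|_X : a \in X \,\}$ of these restricted conjugations, which is itself a conjugation quandle inside $\mathrm{Sym}(X)$ (indeed inside a suitable quotient group). Second — and this is the key move — I want a quandle $Y$ for which $\mathrm{inn}(Y) \cong X$. Take $Y = X^G$-as-a-conjugation-quandle but inside the group $G \ltimes_{\text{something}}$ … more cleanly: let $G$ act on the set $X$, form the semidirect-type construction, and let $Y \subseteq \mathrm{Hol}$ or simply $Y = \{\, (a, 1) : a \in X\,\}$ inside $\mathrm{Sym}(X) \ltimes (\text{free module})$ — the point being to "split" the kernel of $\mathrm{inn}: X \to \mathrm{inn}(X)$ so that the new quandle is faithful-enough that one further application of $\mathrm{inn}$ collapses it back exactly onto $X$. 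I would then verify the quandle axioms for $Y$ (routine, since it is again a conjugation quandle in a group) and verify $\mathrm{inn}(Y) = X$ by a direct computation: $R^Y_{(a,1)} = R^Y_{(b,1)}$ in $Y$ should hold precisely when $a = b$ composed with the residual ambiguity, and the construction is rigged so that $\mathrm{inn}(Y) \cong X$.

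The cleanest realization I expect the authors use: given $X \subseteq G$, set $Y := \widehat{X}$, the conjugation quandle consisting of the $G$-conjugates of a set of coset representatives in the group $G \times X$ (with $X$ an abelian coefficient set) — equivalently, $Y$ is an abelian extension of $X$ realized inside a group, using that abelian extensions of conjugation quandles by trivial cocycles are conjugation quandles in $G \times A$, combined with the general fact that any connected quandle is the inner-image of its "enveloping" conjugation quandle. In fact the slick statement is: \emph{every} conjugation quandle $X$ equals $\mathrm{inn}(Y)$ where $Y$ is the conjugacy (multi-)class of $X$ inside $G$ sitting over $\mathrm{Inn}(G)$ — one takes $Y \subseteq G$ and $X = \mathrm{inn}(Y) \subseteq \mathrm{Inn}(G)$ directly when $X$ is already given as a subquandle of $\mathrm{Inn}(G)$, and reduces the general case to this by replacing $G$ with its image in $\mathrm{Inn}(G \times G)$ or similar. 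I would then invoke Lemma~\ref{lem:triangle} to upgrade the bijection $\mathrm{inn}(Y) \to X$ to a quandle isomorphism, using that $\mathrm{inn}$ and the identifying map are both epimorphisms.

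The main obstacle I anticipate is precisely the \emph{non-faithfulness}: naively $\mathrm{inn}(X)$ is a quotient of $X$, not $X$ itself, so one cannot simply take $Y = X$. The trick is to find the right "cover" $Y$ — a larger conjugation quandle — so that applying $\mathrm{inn}$ to $Y$ gives back exactly $X$ rather than something smaller, and checking that this $Y$ is again a conjugation quandle (so that the construction could in principle be iterated, matching the terminating $\mathrm{inn}$-sequences discussed after Theorem~\ref{thm:main}). Identifying this $Y$ cleanly, and verifying $\mathrm{inn}(Y) \cong X$ rather than merely $\mathrm{inn}(Y) \twoheadrightarrow X$, is where the work lies; once that identification is in hand, Lemma~\ref{lem:triangle} finishes the isomorphism and Theorem~\ref{thm:main} is applicable.
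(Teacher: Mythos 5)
Your proposal correctly isolates the difficulty (one cannot take $Y=X$ when $X$ is non-faithful, so one must produce a larger quandle $Y$ with $\mathrm{inn}(Y)\cong X$) and correctly anticipates that Lemma~\ref{lem:triangle} will be used to upgrade a bijection $\mathrm{inn}(Y)\to X$ to a quandle isomorphism. But the proposal never actually produces $Y$: each candidate you float (conjugates of coset representatives in $G\times X$, a quandle inside a semidirect product of $\mathrm{Sym}(X)$ with a free module, an abelian extension ``realized inside a group'') is left unspecified and unverified, and the phrase ``the construction is rigged so that $\mathrm{inn}(Y)\cong X$'' is circular --- producing that construction and verifying that it is so rigged \emph{is} the theorem. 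This is a genuine gap, not a missing detail.

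The missing idea is the following concrete choice. Write $X=x^G$ (note the paper's definition of a conjugation quandle is a single conjugacy class), let $f\in\mathrm{Aut}(G)$ be conjugation by $x$, that is $f(a)=x^{-1}ax$, and take $Y=\mathrm{GAlex}(G,f)$, the generalized Alexander quandle on all of $G$ with $a*b=f(ab^{-1})b$. One then computes directly that $R_g=R_h$ in $Y$ if and only if $a*g=a*h$ for all $a\in G$, i.e.\ $x^{-1}ag^{-1}xg=x^{-1}ah^{-1}xh$ for all $a$, which by cancellation is equivalent to $x^g=x^h$. Hence $R_g\mapsto x^g$ is a well-defined bijection $\mathrm{inn}(Y)\to X$, and since $p\colon Y\to X$, $p(g)=x^g$, and $\mathrm{inn}\colon Y\to\mathrm{inn}(Y)$ are both quandle epimorphisms, Lemma~\ref{lem:triangle} promotes this bijection to an isomorphism. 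Observe that this $Y$ is \emph{not} another conjugation quandle sitting inside a bigger group in the sense you were searching for; it is the whole group $G$ equipped with a twisted operation. Your instinct that every conjugation quandle is the $\mathrm{inn}$-image of some enveloping object is right, but without naming that object and carrying out the two-line cancellation computation the argument does not close.
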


\begin{proof} Let $X = x^G$ be a conjugation quandle for some finite group $G$ and element $x \in G$.
Define $f \in {\rm Aut}(G)$ by $f(a) = x^{-1}ax$, $a \in G$ and let $Y$ be the generalized Alexander quandle ${\rm GAlex}(G,f)$. Define $p: Y \rightarrow X$ by $p(g) = x^g$. An easy calculation shows that $p$ is a homomorphism. The homomorphism $p$ is clearly surjective as is the homomorphism ${\rm inn} : Y \rightarrow {\rm inn}(Y)$. Now we claim that the mapping 
$\varphi: {\rm inn}(Y) \rightarrow X$ given by $\varphi(R_g) = x^g$  is a bijection and hence by Lemma~\ref{lem:triangle} it is a quandle isomorphism. The see this we note that $R_g = R_h$ holds if and only if  $a*g = a*h$ for all $a \in G$. That is, if and only if $x^{-1} a g^{-1} x g = x^{-1} a h^{-1} x h$ which by cancellation is equivalent to $x^g = x^h$. This completes the proof.
\end{proof}

\begin{remark} 
{\rm 
We note that if $Y$ is connected then $X = {\rm inn}(Y)$ is a conjugacy class in ${\rm Inn}(Y)$. So for connected $X$ the converse of Theorem~\ref{thm:conjugation_criterion} holds. 
In fact, by using Lemma 25 in \cite{Eis3}  if $X$ is  a connected conjugation quandle one may even find a connected quandle $Y$ such that $X \cong {\rm inn}(Y)$. 
However, we do not assume connectivity for the results of this section.
}
\end{remark}

The following is immediate from Theorems~\ref{thm:main} and \ref{thm:conjugation_criterion}.

\begin{theorem} \label{ext_constant} If $\phi: Q \times Q \rightarrow A$ is a 2-cocycle of a quandle $Q$ with an abelian coefficient group $A$
such that the extension $X=E(Q,A,\phi)$ is a  conjugation quandle, then $\Phi_\phi(K)$ is constant for all classical knots $K$.
\end{theorem}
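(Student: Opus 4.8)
The plan is to derive the statement directly as a corollary of the two theorems just proved, so the proof is essentially a matter of chaining them together. First I would invoke Theorem~\ref{thm:conjugation_criterion}: since the abelian extension $X = E(Q,A,\phi)$ is assumed to be a conjugation quandle, that theorem gives a quandle $Y$ with $X \cong {\rm inn}(Y)$. This is the step that does the real work, but it has already been established, so I would simply cite it.

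Next I would set up the hypotheses of Theorem~\ref{thm:main}. By construction $E(Q,A,\phi)$ is an abelian extension of the quandle $Q$ via the $2$-cocycle $\phi$, so $Q$ plays the role of ``$X$'' in the statement of Theorem~\ref{thm:main} and $E(Q,A,\phi)$ plays the role of ``$E$''. The isomorphism $E(Q,A,\phi) \cong {\rm inn}(Y)$ obtained in the previous step is exactly the condition $E \cong {\rm inn}(Y)$ required by Theorem~\ref{thm:main}. Applying that theorem then yields that $\Phi_{(Q,A,\phi)}(K) = \Phi_\phi(K)$ is constant for every classical knot $K$, which is the conclusion.

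I do not anticipate any genuine obstacle here: the statement is deliberately packaged so that all the content lives in Theorems~\ref{thm:main} and~\ref{thm:conjugation_criterion}, and what remains is only bookkeeping to check that the objects named in one theorem match the objects fed into the other (that $E(Q,A,\phi)$ is indeed an abelian extension of $Q$, and that the conjugation-quandle hypothesis applies to $E(Q,A,\phi)$ itself, not to $Q$). The one small point worth stating explicitly in the write-up is that no connectivity assumption on $Q$, $A$, or the extension is needed, since neither Theorem~\ref{thm:main} nor Theorem~\ref{thm:conjugation_criterion} requires it; this is why the corollary is stated in full generality rather than only for connected quandles.

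\begin{proof}
By Theorem~\ref{thm:conjugation_criterion}, since $X = E(Q,A,\phi)$ is a conjugation quandle, there is a quandle $Y$ with $X \cong {\rm inn}(Y)$. On the other hand, $X = E(Q,A,\phi)$ is by definition an abelian extension of the quandle $Q$ by the $2$-cocycle $\phi$. Thus the hypotheses of Theorem~\ref{thm:main} are satisfied, with $Q$ in the role of the base quandle and $E(Q,A,\phi)$ the abelian extension, and we conclude that $\Phi_{(Q,A,\phi)}(K) = \Phi_\phi(K)$ is constant for all classical knots $K$.
\end{proof}
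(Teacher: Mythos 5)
Your proof is correct and follows exactly the route the paper takes: the paper states this result as immediate from Theorems~\ref{thm:main} and~\ref{thm:conjugation_criterion}, which is precisely the chaining you carry out. Nothing is missing.
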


Theorem~\ref{thm:conjugation_criterion} and Corollaries~\ref{cor:no-inn}, \ref{cor:noY} 
imply the following as well.

\begin{corollary}\label{cor:non-faith}
The following Rig quandles  are not conjugation quandles:
$$ Q(8,1),  \  Q(12,2), \ Q(24,1), \ Q(24,7), \  Q(32,1), \ Q(32, 9),  \ Q(36, 1). $$
\end{corollary}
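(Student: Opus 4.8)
The plan is to argue by contraposition from Theorem~\ref{thm:conjugation_criterion}. That theorem asserts that every conjugation quandle $X$ is isomorphic to ${\rm inn}(Y)$ for some quandle $Y$, and the $Y$ exhibited in its proof — namely ${\rm GAlex}(G,f)$ for the finite group $G$ with $X=x^G$ and $f(a)=x^{-1}ax$ — is finite, in keeping with the blanket finiteness assumption of the paper. Consequently, if for a given quandle $E$ one knows that there is \emph{no} finite quandle $Y$ with ${\rm inn}(Y)\cong E$, then $E$ cannot be a conjugation quandle. No connectivity hypothesis enters anywhere here, which is important since conjugation quandles (conjugacy classes, or unions/subsets thereof) need not be connected; Theorem~\ref{thm:conjugation_criterion} is stated without a connectivity assumption precisely for this reason.

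The remaining step is simply to invoke the two corollaries that supply exactly the required non-existence statements. Corollary~\ref{cor:no-inn} gives that $Q(8,1)$, $Q(24,1)$, $Q(24,7)$, and $Q(32,1)$ are not of the form ${\rm inn}(Y)$ for any finite quandle $Y$, and Corollary~\ref{cor:noY} gives the same for $Q(12,2)$, $Q(36,1)$, and $Q(32,9)$. These seven quandles are precisely those in the statement, so each one fails the conclusion of Theorem~\ref{thm:conjugation_criterion} and hence is not a conjugation quandle. One small bookkeeping point to address in writing this out: the corollaries are phrased with literal equality ``${\rm inn}(Y)=E$'' whereas Theorem~\ref{thm:conjugation_criterion} delivers only an isomorphism $E\cong{\rm inn}(Y)$; this is harmless, since replacing $Y$ by an isomorphic quandle replaces ${\rm inn}(Y)$ by an isomorphic inner quandle, so all the statements in play are really statements about isomorphism classes.

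I do not expect a genuine obstacle inside this proof: it is a one-line deduction once the two corollaries are available. The real content has already been done upstream — Corollaries~\ref{cor:no-inn} and~\ref{cor:noY} rest on applying Theorem~\ref{thm:main} (and the argument of Corollary~\ref{cor:Z4}) contrapositively, together with the computer data of \cite{Edwin,rig} exhibiting classical knots on which the relevant $\Z_2$-valued cocycle invariants take non-constant values. Thus the only care needed in the final write-up is the isomorphism-versus-equality remark above and verifying that each of the seven listed quandles is indeed covered by one of the two cited corollaries.
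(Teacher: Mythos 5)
Your proposal is correct and matches the paper's argument exactly: the paper derives this corollary contrapositively from Theorem~\ref{thm:conjugation_criterion} together with Corollaries~\ref{cor:no-inn} and~\ref{cor:noY}, which is precisely your route. Your added remarks on isomorphism versus equality and on not needing connectivity are sound but do not change the substance.
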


\section {A criterion for a finite connected quandle to be a conjugation quandle}\label{sec:VC}

We are grateful to Leandro Vendramin for suggesting the  criterion in this section
for a connected quandle to be a conjugation quandle and for pointing out the
relevance of Lemma 1.8 in \cite{HV}  to its proof.

The development here is from the paper \cite{HV} by Vendramin and Heckenberger as well as the paper \cite{GHV}.
For a (finite) quandle $X=(X,*)$, its {\it enveloping group} is the group 
$G_X$ with presentation 
    $$G_X =\langle x_i , i \in X \ |\  x_{i*j} = x_j^{-1}x_i x_j, i, j \in X \rangle$$ 
and with the natural mapping 
          $$\partial : X \rightarrow G_X,\  i \mapsto x_i.$$ 
This group has been defined and used several times previously,  
Joyce \cite{Joyce} denoted it by $ {\rm Adconj}(X)$, 
Fenn and Rourke called it the {\it associated group} and denoted it by ${\rm As}(X)$ \cite{FR}, 
and Eisermann \cite{Eis4}   called it the {\it adjoint group} and denoted it by ${\rm Adj}(X)$.
The group $\overline{G_X}$ obtained from
$G_X$ by adding the  relations ${x_i^{n_i}, i \in  X}$ where $n_i$ is the order of $R_i$ is called the {\it finite enveloping group}. That this group is finite was proved in \cite{GHV}, Lemma 2.17. As in \cite{HV} we write $$ \pi : G_X \rightarrow \overline{G_X}$$ for the canonical surjection.  Let 
  $$\rho : X \rightarrow  \overline{G_X}$$
be the natural mapping from $X$ to $\overline{G_X}$, that is $\rho = \pi \partial$. It is clear that $\rho$ is a quandle homomorphism to $\overline{G_X}$ considered as a quandle under conjugation. Then we have the following theorem.

\begin{theorem} \label{thm:VC} (Vendramin's Criterion \cite{Leandro2}) A connected quandle $X$ is a conjugation
quandle if and only if  $\rho : X \rightarrow  \overline{G_X}$ is an injection.
\end{theorem}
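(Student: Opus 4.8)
My plan is to prove the two implications separately, using the universal property of the enveloping group $G_X$ together with the finiteness statement for $\overline{G_X}$ from \cite{GHV}. For the forward direction, suppose $X$ is a connected conjugation quandle, so by definition there is a finite group $H$ and an element $x\in H$ with $X = x^H$ and quandle operation $a*b = b^{-1}ab$. The inclusion $\iota : X \hookrightarrow H$ is then a quandle homomorphism into $H$ regarded as a conjugation quandle, so by the universal property of the enveloping group there is a unique group homomorphism $\Psi : G_X \to H$ with $\Psi\,\partial = \iota$; its image is the subgroup $\langle X\rangle \le H$. The key point is to check that $\Psi$ kills the extra relators $x_i^{n_i}$, i.e.\ that it factors through $\pi : G_X \to \overline{G_X}$. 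This holds because the order of $\iota(i)$ in $H$ divides $n_i = |R_i|$: indeed, for $i\in X$, conjugation by $\iota(i)$ acts on $X = x^H$ exactly as the inner automorphism $R_i$ does (both send $a \mapsto i^{-1}ai$), and since $X$ generates $\langle X\rangle$ and $\iota(i)$ lies in $\langle X\rangle$, if $R_i$ has order $n_i$ then $\iota(i)^{n_i}$ is central in $\langle X\rangle$; a short argument (or invoking Lemma~1.8 of \cite{HV}) upgrades this to $\iota(i)^{n_i}=1$, so $\Psi$ descends to $\overline{\Psi} : \overline{G_X}\to H$ with $\overline{\Psi}\,\rho = \iota$. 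Since $\iota$ is injective, so is $\rho$.

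For the converse, suppose $\rho : X \to \overline{G_X}$ is injective. Then $\rho$ realizes $X$ as a subquandle of the conjugation quandle $\overline{G_X}$ (which is finite by \cite{GHV}, Lemma~2.17), i.e.\ as a set of group elements closed under conjugation. Because $X$ is connected, the general fact recalled in Section~\ref{sec:conj} — a connected subquandle of a group closed under conjugation is a single conjugacy class in the subgroup it generates — shows that $\rho(X)$ is the conjugacy class $\rho(i_0)^{\langle \rho(X)\rangle}$ for any $i_0\in X$. Hence $X \cong \rho(X)$ is a conjugation quandle. (One should note $\langle\rho(X)\rangle = \overline{G_X}$ here since the $\rho(i)$ are by construction the generators, but this is not needed.)

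The step I expect to be the main obstacle is the first implication, specifically proving that $\Psi(x_i)^{n_i}=1$ rather than merely that $\Psi(x_i)^{n_i}$ centralizes $\langle X\rangle$. The naive argument only gives centrality, and one genuinely needs the structural input of Lemma~1.8 in \cite{HV} (together with the definition of $\overline{G_X}$ via the orders $n_i$ of the right translations) to conclude that the element is actually trivial; this is presumably why Vendramin pointed to that lemma. Everything else — the universal property of $G_X$, the descent through $\pi$, and the connected-subquandle-is-a-conjugacy-class observation — is routine. A minor point to handle carefully is that $\rho$ being a quandle homomorphism into $\overline{G_X}$ under conjugation is already recorded in the text ($\rho = \pi\partial$ and the defining relations of $G_X$ force $x_{i*j} = x_j^{-1}x_ix_j$), so injectivity is the only additional content of the hypothesis.
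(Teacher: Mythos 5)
Your converse direction is correct and is exactly the paper's (which dismisses it as ``clear''): injectivity of $\rho$ realizes $X$ as a conjugation-closed subset of the finite group $\overline{G_X}$, and connectivity forces it to be a single conjugacy class of the subgroup it generates. The problem is in the forward direction. You correctly identify the critical step --- showing $\Psi(x_i)^{n_i}=1$ so that $\Psi$ descends through $\pi$ --- but the resolution you propose is false: for a conjugation quandle $X=x^H$ the order of $\iota(i)$ in $H$ need \emph{not} divide $n_i=|R_i|$, and no argument (including Lemma~1.8 of \cite{HV}) can upgrade centrality of $\iota(i)^{n_i}$ to triviality. A concrete counterexample: in the dicyclic group $H=\langle a,b \mid a^6=1,\ b^2=a^3,\ b^{-1}ab=a^{-1}\rangle$ of order $12$, the conjugacy class $X=b^H=\{b,\,a^2b,\,a^4b\}$ is a connected (dihedral) quandle with $R_b$ of order $n_b=2$, yet $b^2=a^3\neq 1$. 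So $\Psi$ does not factor through $\overline{G_X}$, and your chain $\overline{\Psi}\,\rho=\iota$ does not exist.

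The paper's proof sidesteps this entirely. From the universal property one gets $g:G_X\to H$ with $\iota=g\,\partial$, and injectivity of $\iota$ already forces injectivity of $\partial$ --- no descent to $\overline{G_X}$ is needed. The passage from ``$\partial$ injective'' to ``$\rho$ injective'' is then exactly what Lemma~1.8 of \cite{HV} supplies: it is a structural statement about the relationship between $G_X$ and its finite quotient $\overline{G_X}$ (roughly, that $\pi$ does not identify distinct generators $x_i$), not a device for pushing an arbitrary morphism to a conjugation quandle down through $\pi$. To repair your argument, replace the descent step by this two-stage deduction: $\iota$ injective $\Rightarrow$ $\partial$ injective $\Rightarrow$ (by the cited lemma) $\rho$ injective.
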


\begin{proof} It is clear that the mapping $\partial : X \rightarrow G_X, i \mapsto x_i$ is universal for quandle homomorphisms $f:X\rightarrow G$ where G is considered as a conjugation quandle. That is,
for any such $f$ there is a unique group homomorphism $g: G_X \rightarrow G$ such that $f = g\partial$.  It then follows that if $f:X\rightarrow G$ is an injection, that is, if $X$ is a conjugation
quandle then $\partial$ must also be an injection.  By Lemma 1.8 in \cite{HV}  $\partial$ is  injective implies that $\rho$ is injective. Since $X$ is connected the converse is clear.
\end{proof}

Clearly each faithful connected quandle is a conjugation quandle. Among the 790 connected quandles of order at most 47 there are precisely 66 non-faithful quandles.  
Using \GAP/Rig and Vendramin's Criterion we find that exactly   30 of the 66 non-faithful Rig quandes are conjugations quandles. 
These are found in Table~\ref{table:conj}.

\begin{table}[htb]
$$
\begin{array}{llllllll}
Q(12,1) & Q(20,3) & Q(24,3) & Q(24,4) & Q(24,5) & Q(24,6) \\
Q(24,14) & Q(24,16) & Q(24,17) & Q(30,1) & Q(30,16) & Q(32,5) \\ 
Q(32,6) & Q(32,7) & Q(32,8) & Q(36,1) & Q(36,4) & Q(36,17) \\ 
Q(36,20) & Q(36,21) & Q(36,28) & Q(36,30) & Q(40,12) & Q(40,13) \\ 
Q(40,17) & Q(40,18) & Q(40,19) & Q(40,20) & Q(42,12) & Q(42,21) \\ 

\end{array}
$$
\caption{Non-faithful conjugation Rig quandles}\label{table:conj}
\end{table}

These are listed in \cite{Edwin1} together with groups in which they are conjugation quandles. Furthermore 
each of these 30 quandles is an abelian extension of index 2 of the form ${\rm inn}: Q \rightarrow {\rm inn}(Q)$, as indicated in 
Table~\ref{table:innconj}. 
By  Theorem~\ref{ext_constant} the corresponding cocycle invariants of these extensions are constant for all knots. Note for example
that $Q(12,8)$ has three different such cocycle extensions. This is possible since 
$H^2_Q(X, \Z_2) \cong \Z_2 \times \Z_2 \times \Z_2$.

\begin{table}[h]
$$
\begin{array}{llllll}
{\rm inn}(Q(12,1)) & \cong & Q(6,1) & {\rm inn}(Q(20,3)) & \cong & Q(10,1) \\ 
{\rm inn}(Q(24,3)) & \cong & Q(12,6) & {\rm inn}(Q(24,4)) & \cong & Q(12,5) \\ 
{\rm inn}(Q(24,5)) & \cong & Q(12,8) & {\rm inn}(Q(24,6)) & \cong & Q(12,9) \\ 
{\rm inn}(Q(24,14)) & \cong & Q(12,7) & {\rm inn}(Q(24,16)) & \cong & Q(12,8) \\ 
{\rm inn}(Q(24,17)) & \cong & Q(12,8) & {\rm inn}(Q(30,1)) & \cong & Q(15,2) \\ 
{\rm inn}(Q(30,16)) & \cong & Q(15,7) & {\rm inn}(Q(32,5)) & \cong & Q(16,4) \\ 
{\rm inn}(Q(32,6)) & \cong & Q(16,4) & {\rm inn}(Q(32,7)) & \cong & Q(16,5) \\ 
{\rm inn}(Q(32,8)) & \cong & Q(16,6) & {\rm inn}(Q(36,1)) & \cong & Q(18,1) \\ 
{\rm inn}(Q(36,4)) & \cong & Q(18,2) & {\rm inn}(Q(36,17)) & \cong & Q(18,8) \\ 
{\rm inn}(Q(36,20)) & \cong & Q(18,9) & {\rm inn}(Q(36,21)) & \cong & Q(18,10) \\ 
{\rm inn}(Q(36,28)) & \cong & Q(18,12) & {\rm inn}(Q(36,30)) & \cong & Q(18,11) \\ 
{\rm inn}(Q(40,12)) & \cong & Q(20,9) & {\rm inn}(Q(40,13)) & \cong & Q(20,9) \\ 
{\rm inn}(Q(40,17)) & \cong & Q(20,10) & {\rm inn}(Q(40,18)) & \cong & Q(20,9) \\ 
{\rm inn}(Q(40,19)) & \cong & Q(20,6) & {\rm inn}(Q(40,20)) & \cong & Q(20,5) \\ 
{\rm inn}(Q(42,12)) & \cong & Q(21,6) & {\rm inn}(Q(42,21)) & \cong & Q(21,9) \\ 

\end{array}
$$
\caption{The image under {\rm inn} of non-faithful, conjugation Rig quandles} \label{table:innconj}
\end{table}

\begin{remark} 
{\rm
Some of the quandles in Table~\ref{table:conj} are missed in Corollary~\ref{cor:rig} since 
they are homomorphic images under ${\rm inn}$ of quandles that are too large to be found easily  with random searches. 
We were able to identify these using Theorem~\ref{thm:VC}.

}
\end{remark}

\begin{question} Is there a connected quandle $Q$ with a 2-cocycle $\phi: Q\times Q \rightarrow A$ with $|A| > 2$ such that $E(Q,\phi,A)$ is
a conjugation quandle?
\end{question}

\section{Relations among some types of epimorphisms}\label{sec:rel}

In this section, we examine relations among the epimorphisms used in the proofs of the main results;
coverings, abelian extensions and inner representations. 
As we pointed out, abelian extensions and inner representations are coverings.
We show that there is no other implication:

\begin{proposition}
{\rm (i)} There exists an abelian extension that is not an inner representation.

{\rm (ii)} There exists an inner representation that is not an abelian extension.

{\rm (iii)} There exists a covering that is neither an abelian extension nor an inner representation.
\end{proposition}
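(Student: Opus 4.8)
The plan is to treat the three parts separately, each by exhibiting an explicit example, preferably among the small connected quandles already catalogued in the paper. For part (i), I would look for an abelian extension $E(X,A,\phi) \to X$ whose total space $E$ is a \emph{faithful} connected quandle: since the image of any inner representation is non-faithful unless the source is already faithful with trivial kernel (and in any case the inner representation ${\rm inn}\colon Y \to {\rm inn}(Y)$ has image ${\rm inn}(Y)$, which is never faithful when $Y$ is non-faithful), what I actually want is an abelian extension map $\alpha\colon E \to X$ that cannot be realized as ${\rm inn}\colon E \to {\rm inn}(E)$. The cleanest route is to pick $X$ faithful (so ${\rm inn}(X)\cong X$) and $\phi$ a non-trivial $2$-cocycle with $A=\Z_2$ such that $E$ is also faithful; then ${\rm inn}(E)\cong E \not\cong X$, so $\alpha$ is not an inner representation. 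Many dihedral quandles or small Rig quandles furnish such $\phi$; I would cite \cite{rig} for a concrete instance, e.g. a non-trivial $\Z_2$-extension of a faithful $Q(n,i)$ landing on another faithful $Q(2n,j)$, and simply verify faithfulness from the Rig data.

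For part (ii), I would reuse the examples already in hand: the quandles $Q(24,1)$ and $Q(40,2)$ discussed after Corollary~\ref{cor:rig} are generalized Alexander quandles with ${\rm inn}(Q(24,1))=Q(12,1)$ and ${\rm inn}(Q(40,2))=Q(20,3)$, and these are inner representations by construction. To see one of them is not an abelian extension, note that an abelian extension $E(X,A,\phi)\to X$ with $E$ connected has all fibers of equal cardinality $|A|$, and moreover is a covering with a very rigid fiber structure; but $|Q(24,1)|/|Q(12,1)| = 2$, so the only obstruction is whether the map is of the abelian-extension form. Here the key point is Corollary~\ref{cor:no-inn}: $Q(24,1)$ admits a non-constant cocycle invariant for some classical knot via \emph{some} $\Z_2$-extension, yet by the remark after Corollary~\ref{cor:rig} it is one of the two exceptional non-faithful Rig quandles. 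I would instead argue directly: if ${\rm inn}\colon Q(24,1)\to Q(12,1)$ were an abelian extension $E(Q(12,1),\Z_2,\phi)$, then by Theorem~\ref{thm:main} (applied with $Y=Q(24,1)$, $E=Q(24,1)\cong{\rm inn}(Y)$) the invariant $\Phi_\phi$ would be constant for all classical knots — which it is; so that particular example does \emph{not} separate the notions. The genuinely safe example is $Q(8,1)$: by Corollary~\ref{cor:no-inn} there is no finite $Y$ with ${\rm inn}(Y)=Q(8,1)$, so I must instead find an inner representation ${\rm inn}\colon Y\to{\rm inn}(Y)$ of index $k>2$, since Lemma~\ref{lem:order2} forces index-$2$ inner representations between connected quandles to be abelian extensions. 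Thus the right example is an inner representation of index $\ge 3$ between connected quandles, and I would extract one from the Rig data (an inner representation with a fiber of size $3$, or one where the fiber structure is visibly not of the form $X\times A$); failing that, a non-connected example ${\rm inn}\colon Y\to{\rm inn}(Y)$ where the fibers have unequal sizes settles it immediately, since abelian extensions have constant fiber size.

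For part (iii), I would invoke Lemma~\ref{lem:cover} and the theory of coverings from Eisermann~\cite{Eis3}: take a connected quandle $X$ and a covering $f\colon Y\to X$ whose fibers are \emph{not} of constant size, or whose deck structure is non-abelian — e.g. a connected covering classified by a non-abelian subgroup of $\pi_1$-type data, which therefore cannot be an abelian extension (those require an abelian group $A$ acting freely transitively on fibers) and cannot be an inner representation (by Lemma~\ref{lem:order2} and index considerations, or simply because ${\rm inn}(Y)$ is determined by $Y$ and would have to equal $X$, contradicting the chosen structure). A dihedral quandle $R_n$ for composite $n$ and its quotient covering $R_n \to R_{n/p}$, or the universal covering of a small non-faithful Rig quandle whose deck group is non-abelian, should do; I would pin down one such pair from \cite{Eis3} or \cite{rig}.

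\textbf{Main obstacle.} The hard part will be part (ii): ensuring the chosen inner representation is genuinely \emph{not} an abelian extension, because Lemma~\ref{lem:order2} shows index-$2$ inner representations between connected quandles automatically are abelian extensions, so I am forced into index $\ge 3$ or into the non-connected realm, and I must check that no \emph{re-parametrization} of the fibers turns the cocycle-like data $\beta\colon X^2\to{\rm Sym}(S)$ into an honest abelian $2$-cocycle. The cleanest resolution is an example with fibers of unequal cardinality (impossible for abelian extensions), which requires dropping connectivity — acceptable since the proposition does not demand connected examples. I would search the Rig tables or \cite{AG} for a non-connected quandle $Y$ with ${\rm inn}\colon Y\to{\rm inn}(Y)$ having fibers of two different sizes, and present that.
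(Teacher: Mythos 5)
Your plan for part (i) rests on a step that cannot be carried out: there is no non-trivial abelian extension $E=E(X,A,\phi)$ with $|A|\ge 2$ whose total space is faithful. Indeed, $R_{(y,b)}(x,a)=(x*y,\,a\,\phi(x,y))$ does not depend on $b$, so $R_{(y,b)}=R_{(y,b')}$ for all $b,b'\in A$ and ${\rm inn}$ is never injective on $E$. Worse, when the base $X$ is faithful this same computation shows ${\rm inn}(E)\cong X$, so the extension map $E\to X$ \emph{is} (up to isomorphism) the inner representation of $E$ --- exactly the opposite of what you want to exhibit. The paper's fix is to insert an intermediate quotient: for $X$ faithful and $C$ a proper non-trivial subgroup of $A$, the abelian extension $E(X,A)\to E(X,A/C)$ cannot be an inner representation because ${\rm inn}(E(X,A))\cong X$ has strictly smaller order than $E(X,A/C)$; concretely, $Q(24,2)=E(Q(6,2),\Z_4)\to Q(12,2)=E(Q(6,2),\Z_2)$. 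Some such device is needed; ``find a faithful total space'' is a dead end.

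For (ii) and (iii) your instincts are right but the proof is not there. In (ii) you correctly deduce from Lemma~\ref{lem:order2} that an index-$2$ example between connected quandles is impossible and that you need index $\ge 3$, but you stop at ``extract one from the Rig data,'' and you do not supply the obstruction that kills a possible abelian-extension structure of that index. The paper uses ${\rm inn}\colon Q(30,4)\to Q(10,1)$ (index $3$) and rules out an abelian extension because $H_2^Q(Q(10,1))\cong\Z_2$ carries no $3$-torsion (a trivial $\Z_3$-extension being excluded by connectivity). In (iii) your candidates are unverified --- for instance $R_n\to R_{n/p}$ is a covering only when $p=2$, and ``non-abelian deck structure'' is not an argument until you exhibit it and show it obstructs an abelian extension. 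The paper instead takes the product of the two previous examples, ${\rm inn}\times\pi\colon Q(30,4)\times Q(24,2)\to Q(10,1)\times Q(12,2)$: a product of coverings is a covering; it is not an inner representation since ${\rm inn}_{X\times Y}={\rm inn}_X\times{\rm inn}_Y$ while $\pi$ is not one; and it is not an abelian extension by a fiber-rigidity lemma (an element of ${\rm Inn}$ of the total space that fixes one point of a fiber of an abelian extension must fix the whole fiber, which fails here by direct computation). That product construction, together with the rigidity lemma, is the missing content for (iii).
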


\begin{proof}
(i) Suppose $X$ is faithful, and let $E=E(X, A)$ be an abelian extension.
Then $R_{(x,a)}=R_{(y,b)}$ if and only if $x=y$, where $(x,a), (y,b) \in E=X \times A$. 
Hence ${\rm inn}: E \rightarrow {\rm inn}(E)$ is an abelian extension, where ${\rm inn}(E)$ is isomorphic to $X$.
If  there is a proper non-trivial subgroup $C$ in $A$, then in \cite{CSV} it was proved that there is a sequence of 
abelian extensions 
$E(X, A) \rightarrow E(X, A/C) \rightarrow X $. 
If $X$ is faithful, therefore, $E(X, A) \rightarrow E(X, A/C)$ is not the inner representation. 
An example is given by $\pi: Q(24,2) \rightarrow Q(12,2)$ where $Q(24,2) = E(Q(6,2),\Z_4)$ and $Q(24,2) = E(Q(12,2),\Z_2)$ as noted in \cite{CSV}.

(ii) This was given in \cite{CSV}: there it is shown that  ${\rm inn}: Q(30, 4) \rightarrow Q(10,1)$ is not an abelian extension since $H_2^Q(Q(10,1))\cong \Z_2$. Another proof that this is not an abelian extension follows from Lemma \ref{lem:criterion} below.

(iii) 
Consider  the product of the two  mappings mentioned above:
$$ {\rm inn} \times \pi: Q(30,4) \times Q(24,2) \rightarrow Q(10,1) \times Q(12,2),$$
where ${\rm inn} \times \pi:  (x,y) \mapsto ({\rm inn}(x), \pi(y))$. We know that ${\rm inn}: Q(30,4) \rightarrow Q(10,1)$
is an inner representation that is not an  abelian extension and $\pi: Q(24,2) \rightarrow Q(12,2)$ is an abelian extension that is not an inner representation. 
We claim that ${\rm inn} \times \pi$ is a covering but not an abelian extension and not an inner representation.

To see that  ${\rm inn} \times \pi$ is not an inner representation note that if ${\rm inn}_Z$ denotes the inner representation on  a quandle $Z$, then 
$${\rm inn}_{X \times Y} = {\rm inn}_X \times {\rm inn}_Y.$$
Thus since $\pi$ is not an inner representation neither is ${\rm inn} \times \pi$.

Since both ${\rm inn}$ and $\pi$ are coverings it is easy to see that their product is a covering. It is more difficult to show that ${\rm inn} \times \pi$ is not an abelian extension. For this we use the following lemma.

\begin{lemma}\label{lem:criterion}  Let $X$ and $Y$ be finite quandles.
Assume $f : Y \rightarrow X$ is an abelian extension, $x \in X$,  $y \in f^{-1}(x)$ and  $\beta \in {\rm Inn}(Y)$.
If $\beta(y) = y$ then $\beta$ acts as the identity on the fiber $f^{-1}(x)$.
\end{lemma}

\begin{proof}
We take $Y = E(X,A,\phi) = X \times A$, and  let $\pi$ be the projection onto $X$.
To simplify notation we write the quandle products in $X$ and $Y$ by juxtaposition. Moreover,
we define $y_1y_2y_3 = (y_1y_2)y_3$ and inductively, for $n > 3$ we set
$y_1y_2 \cdots y_{n+1} =( y_1y_2 \cdots y_{n})y_{n+1}$. 

Since $Y$ is finite for $\beta \in {\rm  Inn}(Y)$ we may write 
 $$\beta  = R_{(x_n,a_n)} R_{(x_{n-1},a_{n-2})} \cdots  R_{(x_1,a_1)}.$$
Then we have for $(z,b) \in X \times A$
$$\beta(z,b) = (z,b)(x_1,a_1)(x_2,a_2) \cdots (x_n,a_n) . $$
Using the definition of the product in $E(X,A,\phi)$ it follows that
$$\beta(z,b) = (zx_1x_2 \cdots x_n, b \, \phi(z,x_1) \, \phi(zx_1,x_2) \, \cdots \, \phi(zx_1x_2\cdots x_{n-1},x_n)).$$
If we assume that $\beta(z,b) = (z,b)$ then we have $$zx_1x_2 \cdots x_n = z$$ and
$$\phi(z,x_1) \, \phi(zx_1,x_2) \, \cdots \, \phi(zx_1x_2\cdots x_{n-1},x_n) = 1.$$
The fiber $\pi^{-1}(z) $ is equal to $ \{(z,c): c \in A \}.$
It follows that $\beta(z,c) = (z,c)$ for all $c \in A$, that is, $\beta$ is  the identity on the
fiber $\pi^{-1}(z)$, as claimed.
\end{proof}

Now returning to the claim that 
$$ {\rm inn} \times \pi: Q(30,4) \times Q(24,2) \rightarrow Q(10,1) \times Q(12,2)$$
is not an abelian extension, we note first that computation shows that for
 ${\rm inn}: Q(30,4) \rightarrow Q(10,1)$ there exist $z \in Q(10,1)$,  $\alpha \in {\rm Inn}(Q(30,4))$ and $x,y \in {\rm inn}^{-1}(z)$
such that $\alpha(x) = x$ but $\alpha(y) \neq y$. This gives an alternative proof via Lemma \ref{lem:criterion} that 
 ${\rm inn}: Q(30,4) \rightarrow Q(10,1)$ is not an abelian extension. Now to show that
$ {\rm inn} \times \pi: Q(30,4) \times Q(24,2) \rightarrow Q(10,1) \times Q(12,2)$ is not an abelian extension, we note that using the same $x$ and $y$ we have that $(x,w)$ and $(y,w)$ for any $w \in Q(24,2)$ lie in the same fiber of ${\rm inn} \times \pi$. It is easy to see that ${\rm Inn}(X \times Y) ={\rm  Inn}(X) \times {\rm Inn}(Y)$ for any quandles $X$ and $Y$. Thus taking $\alpha \in {\rm Inn} (Q(30,4))$ and the identity ${\rm id} \in {\rm Inn}(Q(24,2))$ we have that $\beta = \alpha \times {\rm id} \in {\rm  Inn} (Q(30,4) \times Q(24,2)).$  Clearly 
$\beta(x,w) = (x,w)$ and $\beta(y,w) \neq (y,w)$. So by Lemma \ref{lem:criterion} we have that $ {\rm inn} \times \pi$
is not an abelian extension.
\end{proof}

\subsection*{Acknowledgements}
MS was partially supported by
NIH R01GM109459. The content of
this paper is solely the responsibility of the authors and does not necessarily
represent the official views of  NIH.

\end{document}